\theoremstyle{plain}
\newtheorem*{theorem*}{Theorem} 
\newtheorem{theorem}{Theorem}[section]
\newtheorem{lemma}[theorem]{Lemma}
\newtheorem{cor}[theorem]{Corollary}
\newtheorem{rem}[theorem]{Remark}
\numberwithin{equation}{section}
\theoremstyle{definition}
\newtheorem{example}[theorem]{Example}
\renewcommand{\Re}{{\rm Re}\,}
\renewcommand{\Im}{{\rm Im}\,}
\newcommand{\R}{\mathbb{ R}}
\newcommand{\C}{\mathbb{ C}}
\renewcommand{\H}{\mathbb{ H}}
\renewcommand{\P}{\mathbb{ P}}
\newcommand{\CP}{\C\P}
\newcommand{\invers}{^{-1}}
\newcommand{\calD}{\mathcal{D}}
\DeclareMathOperator{\End}{End}
\DeclareMathOperator{\SU}{SU}
\DeclareMathOperator{\Gl}{GL}
\DeclareMathOperator{\gl}{gl}
\DeclareMathOperator{\tr}{tr}
\DeclareMathOperator{\im}{im}
\DeclareMathOperator{\Span}{span}
\DeclareMathOperator{\id}{id}
\DeclareMathOperator{\Ad}{Ad}
\DeclareMathOperator{\Gr}{Gr}
\newcommand{\zo}{^{(0,1)}}
\newcommand{\oz}{^{(1,0)}}
\newcommand{\trivial}[1]{\underline{\H}^{#1}}
\newcommand{\trivialC}[1]{\underline{\C}^{#1}}
\newcommand{\ttrivial}[1]{\widetilde{\underline{\H}}^{#1}}
\newcommand{\ttrivialC}[1]{\widetilde{\underline{\C}}^{#1}}
\begin{document}
\title[Darboux transforms and simple factor dressing]{Darboux transforms and simple factor dressing of constant mean curvature surfaces}
\author{F. E. Burstall, J. F.  Dorfmeister, K. Leschke, A. Quintino}

\address{F. E. Burstall, Department of Mathematical Sciences, University
  of Bath, Bath BA2 7AY, United Kingdom } \address{J. F. Dorfmeister,
  Zentrum Mathematik, Technische Universit\"at M\"unchen, D-85747
  Garching, Germany} \address{K. Leschke, Department of Mathematics,
  University of Leicester, University Road, Leicester LE1 7RH, United
  Kingdom} \address{A. Quintino, Centro de Matem\'atica e Aplica\c{c}\~{o}es
Fundamentais da  Universidade de Lisboa, Avenida Professor Gama Pinto, 2, 1649-003 Lisboa,
  Portugal }

\email{F.E.Burstall@maths.bath.ac.uk, dorfm@ma.tum-de,  k.leschke@le.ac.uk, \linebreak aurea@ptmat.fc.ul.pt} 

\thanks{Third author partially supported by DFG SPP 1154 ``Global Differential Geometry''}

%\date{\today}

\begin{abstract}
  We define a transformation on harmonic maps $N: M \to S^2$ from a
  Riemann surface $M$ into the 2--sphere which depends on a parameter
  $\mu\in\C_*$, the so--called $\mu$--Darboux transformation. In the
  case when the harmonic map $N$ is the Gauss map of a constant mean
  curvature surface $f: M \to \R^3$ and $\mu$ is real, the Darboux
  transformation of $-N$ is the Gauss map of a classical Darboux
  transform of $f$. More generally, for all parameter $\mu\in\C_*$ the
  transformation on the harmonic Gauss map of $f$ is induced by a
  (generalized) Darboux transformation on $f$. We show that this
  operation on harmonic maps coincides with simple factor dressing,
  and thus generalize results on classical Darboux transforms of
  constant mean curvature surfaces \cite{darboux_isothermic},
  \cite{fran_epos}, \cite{inoguchi_kobayashi}: every $\mu$--Darboux
  transform is a simple factor dressing, and vice versa.
 
\end{abstract}

%%% Local Variables: 
%%% mode: latex
%%% TeX-master: "doc"
%%% End: 
\maketitle
\section{Introduction}

By the Ruh--Vilms Theorem \cite{ruh_vilms} a constant mean curvature
surface $f: M \to \R^3$ of a Riemann surface $M$ into $\R^3$ is
characterized by the harmonicity of its Gauss map $N: M \to S^2$. This
fact allows to use integrable system methods for constant mean
curvature surfaces: using the harmonic Gauss map one can introduce a
spectral parameter $\lambda$ to obtain the associated $\C_*$--family
$d_\lambda$ of flat connections on the trivial $\C^2$ bundle over
$M$. This family is unitary on the unit circle where it describes the
associated family of harmonic maps on the universal cover $\tilde M$
of $M$.  More generally, we will use families of flat connections to
construct new harmonic maps: we consider a $\C_*$ family of flat
connections $d_\lambda$ so that $d_{\lambda=1} = d$ is the trivial
connection and $d_\lambda$ satisfies a reality condition. Assuming
that the map $\lambda\to d\zo_\lambda$ (which sends $\lambda\in\C_*$
to the $(0,1)$--part of $d_\lambda$) can be extended to a map on
$\CP^1$ which is meromorphic in $\lambda$ with only a simple pole at
zero we see that $d_\lambda$ is of the form $d_\lambda= d +
(\lambda-1) \omega\oz + (\lambda\invers-1)\omega\zo$ with $ \omega\oz$
and $\omega\zo$ of type $(1,0)$ and $(0,1)$ respectively. If in
addition $\omega\oz$ is nilpotent then $d_\lambda$ is the associated
family of a harmonic map. From our point of view, dressing of a
harmonic map \cite{uhlenbeck, terng_uhlenbeck} is thus the gauge of
$d_\lambda$ by an appropriate dressing matrix $r_\lambda$: we give
conditions on $r_\lambda$ such that $\hat d_\lambda = r_\lambda \cdot
d_\lambda$ is the associated family of a harmonic map. Fixing
$\mu\in\C_*$, a simple factor dressing is then given by a dressing
matrix $r_\lambda$ which has a simple pole at $\bar\mu\invers$ if
$\mu\not\in S^1$, and which depends on the choice of a
$d_\mu$--parallel line subbundle of the trivial $\C^2$ bundle over the
universal cover $\tilde M$ of $M$. We emphasize that both the
associated family and the dressing are given by an operation on the
harmonic map, and only the Sym--Bobenko formula \cite{sym_bob} then
induces a transformation on constant mean curvature surfaces.  We
compare our definition of a simple factor dressing with the simple
factor dressing in \cite{dorfmeister_kilian} which is defined on the
frame of the constant mean curvature surface: indeed both
transformations agree up to a rigid motion.

In contrast to dressing, the classical Darboux transformation is
originally a transformation on the level of surfaces: geometrically,
two conformal immersions $f, \hat f: M \to \R^3$ form a Darboux pair
if there exists a sphere congruence enveloping both $f$ and $\hat
f$. In this case, both $f$ and $\hat f$ are isothermic. In particular,
the classical Darboux transformation can be applied to a constant mean
curvature surface $f: M \to \R^3$: the map $\hat f: M \to\R^3$ is a
classical Darboux transform of $f$ if and only if $T = \hat f-f$ is a
solution of a certain Riccati equation. However, $\hat f$ has constant
mean curvature only if $T$ additionally satisfies an initial condition.

In \cite{conformal_tori} the classical Darboux transformation is
generalized to a transformation on conformal immersions $f: M \to S^4$
by weakening the enveloping condition. It turns out that this Darboux
transformation is also a key ingredient for integrable systems methods
in surface theory: in the case when $M=T^2$ is a 2--torus the spectral
curve of a conformal torus $f: T^2\to S^2$ is essentially the set of
all Darboux transforms $\hat f: T^2\to S^4$ of $f$.  
In this paper, we are interested in a (local) transformation theory
for general constant mean curvature surfaces $f: M\to \R^3$, and thus
have to allow the Darboux transforms $\hat f: \tilde M \to \R^4$ to be
defined on the universal cover $\tilde M$ of $M$.  To preserve the
constant mean curvature property we will only consider so--called
$\mu$--Darboux transforms \cite{cmc}: for $\mu\in\C_*$ a
$\mu$--Darboux transform $\hat f$ of a constant mean curvature surface
$f$ is constructed by using a parallel section of $d_\mu$ where
$d_\lambda$ is the associated family of the Gauss map $N$ of $f$. In
this case, the difference $T =\hat f-f$ between $f$ and $\hat f$ also
satisfies a Riccati type equation which generalizes the aforementioned
equation. It turns out $\hat f$ is a classical Darboux transform if
and only if $\mu\in S^1\cup \R_*$. In all cases $T$  satisfies the
required initial condition to preserve the constant mean curvature
property: every $\mu$--Darboux transform of a constant mean curvature
surface has constant mean curvature.  Thus, the $\mu$--Darboux
transformation induces a transformation on the harmonic Gauss map.

We extend this latter transformation to a $\mu$--Darboux
transformation on harmonic maps $N: M \to S^2$: using the associated
family of flat connections of $N$ and a $d_\mu$--parallel section, we
present an algebraic operation to obtain a new harmonic map on the
universal cover $\tilde M$ of $M$. It turns out, that a simple factor
dressing of a harmonic map $N$ coincides with a $\mu$--Darboux
transform of $N$: the $d_\mu$--parallel bundle which is used to define
the simple factor dressing matrix $r_\lambda$ is spanned by the
$d_\mu$--parallel section which gives the $\mu$--Darboux
transform.  In particular, we obtain a generalization of results on
the classical Darboux transformation \cite{darboux_isothermic,
  fran_epos, inoguchi_kobayashi}: a $\mu$--Darboux transform of a
constant mean curvature surface $f: M \to \R^3$ is given by a
simple factor dressing of the parallel constant mean curvature surface
of $f$, and vice versa.

Many other surface classes are also linked to harmonicity, e.g.,
Hamiltonian stationary Lagrangians $f: M \to \C^2$. In this case, the
so--called left normal $N: M \to S^2$ of $f$ is harmonic, and we can
apply again both a simple factor dressing and the $\mu$--Darboux
transformation on the harmonic map $N$. The $\mu$--Darboux
transformation on the harmonic left normal is induced by a
transformation on the level of surfaces \cite{hsl}. In particular,
though there is no Sym--Bobenko formula for Hamiltonian stationary
Lagrangians, we now have an interpretation of simple factor dressing
of a harmonic left normal $N$ on the level of surfaces: the left
normal of a $\mu$--Darboux transform of $f$ is the simple factor
dressing of $-N$.  Moreover, in \cite{quintino} a dressing on
(constrained) Willmore surfaces is introduced and it is shown that the
simple factor dressing with simple pole at $\mu\in\R_* \cup S^1$
coincides with the Darboux transformation on the conformal Gauss map
of a (constrained) Willmore surface as defined in
\cite{coimbra}. Indeed, the latter transformation can be extended
\cite{willmore_harmonic} in a way similar to what we discuss in this
paper to the case when $\mu\in\C_*$, and we expect that the simple
factor dressing of a (constrained) Willmore surface $f: M \to S^4$ is
the $\mu$--Darboux transformation on the conformal Gauss map of $f$.

%%% Local Variables: 
%%% mode: latex
%%% TeX-master: "doc"
%%% End: 
\section{Harmonic maps and family of flat connections}

We first recall the well--known link \cite{uhlenbeck}
between a harmonic map into the 2--sphere and a $\C_*$--family of flat
connections. In the following we will always identify the Euclidean
4-space $\R^4$ with the quaternions $\H$.  In particular, we identify
$\R^3=\Im\H$ with Euclidean product $\langle a, b \rangle = -\Re(ab)$,
$a, b\in\Im\H$, and $S^2 =\{n\in \Im\H\mid n^2=-1\}$. A map $N\colon M
\to S^2$ from a Riemann surface $M$ into the 2--sphere is harmonic if
it is a critical point of the energy functional, that is \cite{Klassiker}, if
\[
d*dN = N dN \wedge *dN\,.
\]
Here we write for a 1--form $\omega$
\[
*\omega(X) = \omega(J_MX)\,, \quad X\in TM\,,
\]
where $J_M$ denotes the complex structure of the Riemann surface $M$. In
other words, $*$ is the negative Hodge star operator.

We decompose $dN$ into $(1,0)$ and $(0,1)$--parts 
\[
(dN)' =\frac 12(dN - N*dN), \quad (dN)''=\frac 12(dN + N*dN)
\]
with respect to $N$.  The harmonicity condition is now expressed by
the condition that $(dN)'$, or equivalently $(dN)''$, is closed.

Every smooth map $N: M \to S^2$ induces a quaternionic linear endomorphism
$J\in\Gamma(\End(\trivial{}))$ on the trivial bundle $\trivial{} =
M\times \H$ over $M$ by setting
\[
J\phi = N \phi, \quad \phi\in\Gamma(\trivial{})\,.
\]
If we define the Hopf fields of the complex structure $J$ (with
respect to the flat connection $d$) as
\begin{equation*}
%\label{eq:Hopf fields}
A = \frac {J(dJ) + *dJ}4 \quad \text{ and } \quad Q = \frac {J(dJ) - *dJ}4 
\end{equation*}
then the closedness of $(dN)'$ can be rephrased
in terms of the complex structure $J$:
\begin{lemma}
\label{lem:hopf field coclosed}
  A smooth map $N: M \to S^2$ is harmonic if and only if the Hopf
  field $A$ of the associated complex structure
  $J\in\Gamma(\End(\trivial{}))$  satisfies
\[
d*A=0\,.
\]
\end{lemma}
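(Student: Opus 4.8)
The plan is to translate the harmonicity condition $d*dN = N\,dN\wedge *dN$, which is stated in terms of the map $N$, into an equivalent condition on the endomorphism $J$ and its Hopf field $A$. First I would record the elementary relations coming from $J\phi = N\phi$ and $J^2 = -\Id$ (since $N^2=-1$), namely that $dJ$ anticommutes with $J$ (differentiating $J^2=-\Id$ gives $(dJ)J + J(dJ)=0$), so $J(dJ) = -(dJ)J$. I would also note how $*$ interacts with the decomposition into $(1,0)$ and $(0,1)$ parts: on a $1$--form $\omega$ we have $*(dN)' = -N(dN)' $ type identities, reflecting that $(dN)'$ is the $J$--anti-complex-linear part. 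The point of introducing $A$ and $Q$ is exactly that they repackage $(dN)'$ and $(dN)''$; concretely I expect $A$ and $Q$ to correspond, up to the factor $N=J$ and a constant, to the $(0,1)$ and $(1,0)$ parts of $dJ$ (equivalently of $dN$).

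The key computational step is to express $d*A$ in terms of $d*dN - N\,dN\wedge *dN$. Starting from $A = \tfrac14\bigl(J\,dJ + *dJ\bigr)$, I would apply $*$ and then $d$, using that $d$ is flat (so $dd=0$) and that $**=-\Id$ on $1$--forms over a surface. Differentiating $*A = \tfrac14\bigl(J*dJ + **dJ\bigr)=\tfrac14\bigl(J*dJ - dJ\bigr)$, the exact term $dJ$ contributes a closed piece, so $d*A = \tfrac14\,d\bigl(J*dJ\bigr) = \tfrac14\bigl(dJ\wedge *dJ + J\,d*dJ\bigr)$. The first wedge term is precisely the quadratic expression $N\,dN\wedge *dN$ in disguise (since $J = N\cdot$ and $dJ = dN\cdot$ as operators on sections), and the term $J\,d*dJ$ encodes $N\,d*dN$. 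Collecting these, $d*A$ becomes (a nonzero multiple of) $J\bigl(d*dN - N\,dN\wedge *dN\bigr)$.

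Because $J$ is a pointwise invertible endomorphism, the vanishing of $d*A$ is equivalent to the vanishing of $d*dN - N\,dN\wedge *dN$, which is exactly the harmonicity of $N$; this gives the claimed equivalence in both directions. The main obstacle I anticipate is bookkeeping rather than conceptual: one must be careful with the noncommutativity of quaternionic multiplication and with the sign and ordering conventions in wedge products of $\mathrm{End}(\trivial{})$--valued $1$--forms, since $J$ does not commute with $dJ$. In particular, verifying that the cross terms combine correctly (rather than cancelling or doubling) requires using the anticommutation $J\,dJ = -(dJ)J$ at the right moments, together with the identity $*dJ = J\,dJ - \ast$--type relations built into the definitions of $A$ and $Q$. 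Once the algebra of these $1$--form identities is set up cleanly, the equivalence follows directly.
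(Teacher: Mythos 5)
Your proof is correct and is essentially the computation the paper has in mind: the paper treats the lemma as immediate from the reformulation of harmonicity as closedness of $(dN)'$ together with the identity $A=\tfrac12 *(dJ)'$ (equation (2.2)), and your direct expansion $d*A=\tfrac14\bigl(dJ\wedge *dJ + J\,d*dJ\bigr)=\tfrac14 J\bigl(d*dN - N\,dN\wedge *dN\bigr)$ (as left-multiplication operators, with $J$ invertible) is exactly that argument carried out in one step. The only blemish is the hedged aside ``$*(dN)'=-N(dN)'$ type identities'': with the paper's conventions one has $*(dN)'=N(dN)'=-(dN)'N$, but this plays no role in your main chain of equalities.
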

Note that the derivative of $J$ is given in terms of the Hopf fields
by
\begin{equation}
\label{eq: derivative of J}
dJ = 2(*Q -*A)\,,
\end{equation}
and therefore, the condition $d*A=0$ is equivalent to $d*Q=0$. Since
\begin{equation}
\label{eq: hopf dJ}
A = \frac 12*(dJ)' \quad \text{ and } \quad Q=-\frac 12*(dJ)''
\end{equation}
the Hopf fields both
anti--commute with the complex structure $J$ and have type $(1,0)$ and
$(0,1)$ with respect to $J$, that is
\begin{equation}
\label{eq:type of Hopf fields}
*A = J A = - AJ, \quad *Q = - J Q = Q J\,.
\end{equation}
In particular, by type considerations this implies
\begin{equation}
\label{eq:A wedge Q=0}
A \wedge Q = Q \wedge A =0\,.
\end{equation}

Moreover, if we decompose the trivial connection $d$ on $\trivial{}$
into $J$ commuting and anti--commuting parts $ d = d_+ + d_- $ then
\begin{equation}
\label{eq:d-}
d_-= A + Q
\end{equation}
where we used $d_-=\frac 12 J(dJ)$ and the equations (\ref{eq:
  derivative of J}) and (\ref{eq:type of Hopf fields}).

To introduce a spectral parameter $\lambda\in\C_*$ we consider $\H$ as a
complex $\C^2$ via the splitting $\H = \C + j\C$ with $\C
=\Span\{1,i\}$.  In other words, if we define the complex structure
$I$ by right--multiplication by $i\in\H$, then $\C^2$ can be
identified with $ (\H, I)$. Under this identification
$I\in\End_\C(\C^2)$ becomes a complex linear endomorphism. For
simplicity of notation we will use the same symbol for the
endomorphism $\lambda=a+Ib\in\End_\C(\C^2)$, $a, b\in\R$, and the complex
number $\lambda=a+ib\in\C$ since $I \phi = \phi i$ for $\phi\in\C^2$.

For $\lambda\in\C_*$ we define the complex connection
\begin{equation}
\label{eq:dlambda}
d_\lambda = d + (\lambda -1) A\oz + (\lambda\invers-1)A\zo
\end{equation}
on the trivial bundle $\trivialC 2 =(\trivial{}, I)$ over $M$ where
   \[
A\oz = \frac 12(A - I*A)\, \quad \text{and } \quad A\zo=\frac 12(A +I*A)
\]
are the $(1,0)$ and $(0,1)$ parts of the Hopf field $A$ with respect
to the complex structure $I$ on $\trivial{}$.  We denote by
\[
\Gamma(K\End_\C(\trivialC 2))=\{\omega\in\Omega^1(\End_\C(\trivialC 2))\mid *\omega= I\omega\}
\]
 and
\[
\Gamma(\bar K\End_\C(\trivialC 2))=\{\omega\in\Omega^1(\End_\C(\trivialC 2))\mid *\omega=-I\omega\}
\]

the 1--forms with values in the complex linear endomorphisms of type
$(1,0)$ and $(0,1)$, taken with respect to the complex structure $I$.  With
this notation we have
\[
A\oz\in\Gamma(K\End_\C(\trivialC 2)), \quad A\zo\in\Gamma(\bar K\End_\C(\trivialC 2))\,.
\]

If we denote by $E$ and $E^\perp = E j$ the $\pm i$ eigenspaces of the
complex structure $J$ on $\trivial{}$ respectively then the orthogonal
projections with respect to the splitting $\trivial{} = E \oplus
E^\perp$ are given by
\[
\pi_E = \frac 12(1-IJ), \quad \pi_{E^\perp} = \frac12(1+IJ)\,.
\]
Since $J$ is quaternionic linear $J$ commutes with $I$, and so does
$A$. Recalling (\ref{eq:type of Hopf fields}) that $A$ anti--commutes
with $J$, we see
\begin{equation}
\label{eq:hopf with projection}
A\oz = A\pi_{E^\perp} = \pi_E A \,, \quad \text{ and } \quad A\zo = A\pi_E =\pi_{E^\perp}A\,,
\end{equation}
 in particular $(A\oz)^2=(A\zo)^2=0$, and
\begin{equation}
\label{eq:im ker A oz}
\im A\oz \subset E \subset \ker A\oz, \quad \im A\zo \subset E^\perp \subset \ker A\zo\,.
\end{equation}

Since $E^\perp = Ej$ we have $\pi_{E}(\phi j) = (\pi_{E^\perp} \phi)j$
and we obtain
\begin{equation}
\label{eq:reality for hopf field}
A\oz(\phi j) = (A\zo \phi)j\,, \quad \phi\in \Gamma(\trivial{})\,.
\end{equation}
Moreover, $\lambda(\phi j) = (\bar\lambda \phi)j$ for $\lambda\in\C$
so that  the reality condition
\begin{equation}
\label{eq:reality}
d_\lambda(\phi j) = (d_{{\bar\lambda}\invers}\phi) j\,, \quad \phi\in\Gamma(\trivial{}), 
\end{equation}
holds for the complex connection $d_\lambda$. In particular,
$d_\lambda$ is a quaternionic connection if and only if $\lambda\in
S^1$.

To compute the curvature of $d_\lambda$ we first observe that $I$ commutes
with $J$, and thus also with $A$, since $J$ is quaternionic linear. Denoting by
\begin{equation}
\label{eq:connection form}
\alpha_\lambda = (\lambda-1)A\oz + (\lambda\invers-1)A\zo
\end{equation}
the connection form of $d_\lambda$ we see with (\ref{eq:hopf with
  projection}) that
\[
\alpha_\lambda \wedge \alpha_\lambda = (2-\lambda-\lambda\invers)A\wedge A\,.
\]
On the other hand, we write with (\ref{eq:type of Hopf fields})
\begin{equation}
\label{eq:A10 via *A}
A\oz=*A\frac{J-I}2, \quad 
A\zo=*A\frac{J+I}2\,,
\end{equation}
and recall (\ref{eq: derivative of J}) and (\ref{eq:A wedge Q=0}) to obtain
\[
d\alpha_\lambda = (d*A)\left((\lambda-1) \frac{J-I}2 + (\lambda\invers-1)\frac{J+I}2\right) +
(\lambda+\lambda\invers-2)*A\wedge *A\,.
\]
Since $A\wedge A = *A \wedge *A$ the curvature of $d_\lambda$ is therefore
given by
\[
R_\lambda = (d*A)\left((\lambda-1)\frac{J-I}2 + (\lambda\invers-1) \frac{J+I}2\right)\,,
\]
Lemma \ref{lem:hopf field coclosed} now yields  the familiar link between
harmonic maps and  $\C_*$--families of flat connections:
\begin{theorem}
\label{thm:family of flat connections}
  A smooth map $N: M \to S^2$ is harmonic if and only if the associated
  family of complex connections
\[
d_\lambda = d + (\lambda -1) A\oz + (\lambda\invers-1)A\zo
\]
on the trivial bundle $\trivialC{2}$ over $M$ is flat for all
$\lambda\in\C_*$.
\end{theorem}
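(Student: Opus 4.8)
The curvature computation carried out immediately before the statement has already done the essential work: it exhibits
\[
R_\lambda = (d*A)\left((\lambda-1)\frac{J-I}2 + (\lambda\invers-1)\frac{J+I}2\right),
\]
so my plan is to take this formula as the starting point and reduce the theorem to comparing the vanishing of $R_\lambda$ with the coclosedness condition $d*A=0$ supplied by Lemma \ref{lem:hopf field coclosed}. I would not recompute the curvature.

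The forward implication is then immediate. Assuming $N$ harmonic, Lemma \ref{lem:hopf field coclosed} gives $d*A=0$, and the displayed formula yields $R_\lambda=0$ for every $\lambda\in\C_*$; hence each $d_\lambda$ is flat.

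The substance lies in the converse, and the only point needing care is to ensure that flatness of the whole family genuinely forces $d*A=0$ rather than some weaker condition. The key observation I would establish is that the endomorphism
\[
P_\lambda = (\lambda-1)\frac{J-I}2 + (\lambda\invers-1)\frac{J+I}2,
\]
which multiplies $d*A$ on the right in $R_\lambda=(d*A)P_\lambda$, is invertible for every $\lambda\neq1$. To see this I would rewrite $P_\lambda$ through the projections $\pi_E,\pi_{E^\perp}$: since $I$ and $J$ commute and square to $-1$, on the $\pm i$--eigenspaces $E$ and $E^\perp$ of $J$ we have $J=I$ on $E$ and $J=-I$ on $E^\perp$, whence $\frac{J-I}2=-I\pi_{E^\perp}$ and $\frac{J+I}2=I\pi_E$. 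This gives
\[
P_\lambda = I\big((\lambda\invers-1)\pi_E-(\lambda-1)\pi_{E^\perp}\big),
\]
which acts as the scalar $(\lambda\invers-1)I$ on $E$ and as $-(\lambda-1)I$ on $E^\perp$; as $I$ is invertible and both scalars are nonzero for $\lambda\neq1$, the operator $P_\lambda$ is invertible.

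With this in hand the converse closes quickly: flatness $R_\lambda=0$ (even for a single $\lambda\neq1$) lets me right--multiply by $P_\lambda\invers$ to obtain $d*A=0$, and Lemma \ref{lem:hopf field coclosed} then shows $N$ is harmonic. I do not anticipate a serious obstacle here; the entire argument is bookkeeping with the two commuting complex structures $I$ and $J$, and the one thing genuinely worth verifying is the invertibility of $P_\lambda$, which is precisely what makes the ``only if'' direction go through.
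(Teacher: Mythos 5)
Your proof is correct and takes essentially the same route as the paper: both rest on the curvature formula $R_\lambda = (d*A)\left((\lambda-1)\tfrac{J-I}2 + (\lambda\invers-1)\tfrac{J+I}2\right)$ computed immediately before the theorem, combined with Lemma~\ref{lem:hopf field coclosed}. Your explicit observation that the endomorphism multiplying $d*A$ acts as $(\lambda\invers-1)I$ on $E$ and as $-(\lambda-1)I$ on $E^\perp$, hence is invertible for $\lambda\neq 1$, simply makes precise the converse step that the paper leaves implicit (and shows, slightly more strongly, that flatness at a single $\lambda\neq 1$ already forces harmonicity).
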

\begin{rem}
  Up to gauge equivalence, $d_\lambda$ is the family of
  flat connections used by \cite{hitchin-harmonic} to construct the
  spectral curve of a harmonic torus in the 2--sphere, see
  \cite{cmc}.
\end{rem}

The family of flat connections induces a $S^1$--family of
harmonic maps. This family is given by $d_\lambda$--parallel sections
and thus is only defined on the universal cover $\tilde M$ of $M$.  We
denote by $\ttrivial{} = \tilde M\times \H$ and
$\ttrivialC 2 =\tilde M \times\C^2$ the trivial bundles over
$\tilde M$.

\begin{theorem}
  \label{thm: associated family}
 Let $N: M \to S^2$ be a harmonic map
  from a Riemann surface $M$ into the 2--sphere, $J$ the corresponding
  complex structure, and $d_\lambda$ the associated family of flat
  connections. For $\mu\in\C_*$ the Hopf field $A_\mu = \frac{J(d_\mu
    J) + *d_\mu J}4$ of $J$, taken with respect to the flat
  connection $d_\mu$, is co--closed with respect to $d_\mu$ for $\mu\in
  \C_*$, that is
\[
d_\mu *A_\mu=0\,.
\]

In particular, if $\varphi\in\Gamma(\widetilde{\trivial{}})$ is a
$d_\mu$--parallel section for $\mu\in S^1$, then $N_{\varphi} =
\varphi\invers N \varphi: \tilde M \to S^2$ is harmonic with respect
to $d$. Furthermore, denoting by $\Phi$ the endomorphism given by left
multiplication by $\varphi$, the associated $\C_*$ family of flat
connections of $N_\varphi$ is given by the gauge
\[
d_{\varphi,\lambda} = \Phi\invers \cdot d_{\lambda\mu}\,.
\]
\end{theorem}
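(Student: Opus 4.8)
The plan is to deduce the whole statement from Theorem \ref{thm:family of flat connections} by recognizing the shifted family $d_{\lambda\mu}$ as the associated family of the \emph{same} map $N$, but built with base connection $d_\mu$ in place of $d$. The first task is to compute the Hopf field $A_\mu$ explicitly. Writing $d_\mu = d + \alpha_\mu$ with $\alpha_\mu$ as in \eqref{eq:connection form}, and noting that each of $A\oz,A\zo$ anti--commutes with $J$ (by \eqref{eq:hopf with projection} and \eqref{eq:type of Hopf fields}, since $\pi_E,\pi_{E^\perp}$ commute with $J$), I would first observe $d_\mu J = dJ + [\alpha_\mu,J] = dJ + 2\alpha_\mu J$. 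Substituting into $A_\mu = \tfrac14\big(J(d_\mu J)+*d_\mu J\big)$ and using that on $\im A\oz\subset E$ and $\im A\zo\subset E^\perp$ (see \eqref{eq:im ker A oz}) the complex structures $J$ and $I$ agree up to sign, a short computation collapses to
\[
A_\mu = \mu A\oz + \mu\invers A\zo .
\]
Hence $(A_\mu)\oz = \mu A\oz$ and $(A_\mu)\zo = \mu\invers A\zo$, and comparing coefficients of $A\oz,A\zo$ gives the key identity
\[
d_{\lambda\mu} = d_\mu + (\lambda-1)(A_\mu)\oz + (\lambda\invers-1)(A_\mu)\zo .
\]

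This exhibits $d_{\lambda\mu}$ as the associated family of $J$ relative to $d_\mu$. Since $N$ is harmonic, $d_{\lambda'}$ is flat for all $\lambda'\in\C_*$, in particular for $\lambda'=\lambda\mu$; and the curvature computation preceding Theorem \ref{thm:family of flat connections} applies verbatim with $(d,A)$ replaced by $(d_\mu,A_\mu)$, the structural identities \eqref{eq: derivative of J}, \eqref{eq:type of Hopf fields}, \eqref{eq:A wedge Q=0} being valid for any connection. This expresses $R_{\lambda\mu}$ as $(d_\mu *A_\mu)$ times a $\lambda$--dependent endomorphism that is not identically zero, so flatness for all $\lambda$ forces $d_\mu *A_\mu=0$, which is the first assertion.

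For the remaining assertions set $J_\varphi = \Phi\invers J\Phi$; as $\Phi$ is left multiplication by $\varphi$, one has $J_\varphi\psi = \varphi\invers N\varphi\psi = N_\varphi\psi$, so $J_\varphi$ is the complex structure of $N_\varphi$, and $N_\varphi$ takes values in $S^2$. Define $d_{\varphi,\lambda} := \Phi\invers\cdot d_{\lambda\mu}$. Two facts drive the argument. First, because $\mu\in S^1$ the reality condition \eqref{eq:reality} gives $\bar\mu\invers=\mu$, so $\varphi$ and $\varphi j$ are both $d_\mu$--parallel and form a $d_\mu$--parallel $\C$--frame; $\Phi\invers$ carries them to the constant sections $1,j$, which are $d$--parallel. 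Since a connection is determined by a parallel frame, $\Phi\invers\cdot d_\mu = d$, in particular $d_{\varphi,1}=d$. Second, $\Phi$ (left multiplication) commutes with $I$ (right multiplication by $i$), so conjugation by $\Phi$ preserves the $I$--type of endomorphism--valued forms.

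Finally, covariant differentiation of endomorphisms is natural under gauge, so the Hopf field $\hat A$ of $J_\varphi=\Phi\invers J\Phi$ with respect to $d=\Phi\invers\cdot d_\mu$ is $\hat A = \Phi\invers A_\mu\Phi$; since $\Phi$ commutes with $I$ this respects the type decomposition, giving $\hat A\oz = \Phi\invers(A_\mu)\oz\Phi$ and $\hat A\zo = \Phi\invers(A_\mu)\zo\Phi$. Conjugating the key identity by $\Phi\invers$ then yields
\[
d_{\varphi,\lambda} = \Phi\invers\cdot d_{\lambda\mu} = d + (\lambda-1)\hat A\oz + (\lambda\invers-1)\hat A\zo ,
\]
which is exactly the associated family of $J_\varphi$. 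As a gauge of the flat family $d_{\lambda\mu}$ it is flat for all $\lambda\in\C_*$, so Theorem \ref{thm:family of flat connections} applied to $N_\varphi$ shows $N_\varphi$ is harmonic with associated family $d_{\varphi,\lambda}=\Phi\invers\cdot d_{\lambda\mu}$. I expect the main obstacle to be the delicate bookkeeping in the first paragraph — tracking the commutation of $J$ and $I$ with the Hopf fields and the Hodge--star sign conventions to obtain $A_\mu=\mu A\oz+\mu\invers A\zo$ — together with fixing the gauge convention so that naturality reads $\hat A = \Phi\invers A_\mu\Phi$ with the correct placement of $\Phi^{\pm1}$.
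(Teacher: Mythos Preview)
Your proposal is correct and follows essentially the same line as the paper's proof: both compute $A_\mu=\mu A\oz+\mu^{-1}A\zo$, recognize $d_{\lambda\mu}$ as the associated family of $J$ based at $d_\mu$, and then transfer everything by the gauge $\Phi$. The only cosmetic difference is that the paper establishes $d_\mu*A_\mu=0$ by directly expanding $d_\mu(d_\mu J)'$ using \eqref{eq:type} and flatness of $d_\mu$, whereas you deduce it by rerunning the curvature computation with $(d,A)$ replaced by $(d_\mu,A_\mu)$; similarly, the paper reads off $d*A_\varphi=0$ directly from $d_\mu*A_\mu=0$ via gauge naturality, while you instead show $d_{\varphi,\lambda}$ is flat and invoke Theorem~\ref{thm:family of flat connections}.
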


\begin{proof}
  Write $d_\mu= d +\alpha_\mu$ with connection form $\alpha_\mu$ given
  by (\ref{eq:connection form}).  The equations (\ref{eq:type of Hopf
    fields}) and (\ref{eq:A10 via *A}) 
  show $*\alpha_\mu = J\alpha_\mu = - \alpha_\mu J$ where we also used
  that $[I,J]=0$.  From  type arguments we therefore obtain
\begin{equation}
\label{eq:type}
\alpha_\mu\wedge (dJ)'' = (dJ)''\wedge \alpha_\mu=0\,.
\end{equation}
Using $d_\mu J = dJ +[\alpha_\mu, J]$ we also see with $[I, J]=0$
\begin{equation}
\label{eq: dmuJ'}
(d_\mu J)' = (dJ)' +
  [\alpha_\mu, J]\,,
\end{equation}
that is, the Hopf field of $J$ with respect to $d_\mu$ is
\begin{equation}
\label{eq:Hopf associated family}
A_\mu = \mu A\oz + \mu\invers A\zo\,.
\end{equation}
Since $J$ is harmonic with respect to $d$ and $d \alpha_\mu +
\alpha_\mu\wedge\alpha_\mu=0$ by the flatness of $d_\mu$, the
equations (\ref{eq: dmuJ'}) and (\ref{eq:type}) give
\begin{equation*}
d_\mu(d_\mu J)' = d\big((dJ)' + [\alpha_\mu, J]\big) + \Big[\alpha_\mu\wedge 
\big((dJ)' + [\alpha_\mu, J]\big)\Big] =0\,.
%&=& -\alpha_\mu \wedge (dJ)'' - (dJ)''\wedge \alpha_\mu
\end{equation*}
This shows that $*A_\mu = -\frac 12(d_\mu J)'$ is closed with respect
to $d_\mu$.  For $\mu\in S^1$ the connection $d_\mu$ is quaternionic,
and is given by the gauge $d_\mu = \Phi \cdot d$ of $d$ by $\Phi$.
Furthermore, the complex structure of $N_\varphi$ is given by
$J_\varphi = \Phi\invers J \Phi$, and we obtain $dJ_\varphi =
\Ad(\Phi\invers)(d_\mu J)$ since $\Phi$ is parallel with respect to
$d_\mu$. Thus, $J_\varphi$ has Hopf field
\begin{equation*}
A_{\varphi} = \Phi\invers A_\mu \Phi\,, 
\end{equation*}
and $0 =d_\mu(*A_\mu\circ \Phi) = \Phi \circ(d*A_\varphi)$ shows that
$N_\varphi$ is harmonic with respect to $d$.  Finally, from
(\ref{eq:Hopf associated family}) we see that
\[
d_\mu + (\lambda-1) A_\mu\oz + (\lambda\invers-1)
A_\mu\zo = d + (\lambda\mu-1) A\oz + ((\lambda\mu)\invers-1) A\zo =
d_{\lambda\mu}\,,
\]
and gauging $d_{\varphi,\lambda} = d + (\lambda-1) A_\varphi\oz + (\lambda\invers-1) A_\varphi\zo$ by $\Phi$ we get
\[
\Phi \cdot d_{\varphi,\lambda} = 
  d_{\lambda\mu}\,.
\]

\end{proof}

\begin{rem} Since $d_\mu$ is quaternionic for $\mu\in S^1$, the
  section $\varphi$ is unique up to a quaternionic constant and thus
  $N_\varphi$ is uniquely given by $\mu$ up to an orthogonal map. The
  family $N_\varphi$ is called the \emph{associated family} of $N$.
\end{rem}

%%% Local Variables: 
%%% mode: latex
%%% TeX-master: "doc"
%%% End: 
\section{Dressing of a harmonic map into the 2-sphere}
 
We have seen that a harmonic map from a Riemann surface $M$ into the
2--sphere gives rise to an associated $\C_*$--family of flat
connections $d_\lambda$ on the trivial $\C^2$ bundle over $M$ and the
associated family of harmonic maps on the universal cover $\tilde M$
of $M$. To construct new harmonic maps from given ones via dressing,
one uses again the family of flat connections: Gauging $d_\lambda$ by
an appropriate $\lambda$--dependent dressing matrix, one can
reconstruct a harmonic map from the new family of flat connections.

\newcommand{\AK}{A^{(1,0)}}
\newcommand{\AKbar}{A^{(0,1)}}
\newcommand{\hatAK}{\hat A^{(1,0)}}
\newcommand{\hatAKbar}{\hat A^{(0,1)}}
\newcommand{\oK}{\omega^{(1,0)}}
\newcommand{\oKbar}{\omega^{(0,1)}}

\begin{theorem}
\label{thm:family gives harmonic}  
Let $M$ be Riemann surface and assume that for every $\lambda\in\C_*$
the connection
\begin{equation}
\label{eq:general harmonic connection}
d_\lambda = d + (\lambda -1) \oK + (\lambda\invers-1)\oKbar\,, \quad \lambda\in\C_*\,,
\end{equation}
on $\trivialC{2}$ is flat where
\[
\oK\in\Gamma(K\End_\C(\trivialC 2)), \quad \oKbar\in\Gamma(\bar K\End_\C(\trivialC 2))
\]
are non--trivial endomorphism--valued 1--forms on the trivial bundle $\trivialC 2 =
(\trivial{},I)$ over $M$ of type $(1,0)$ and $(0,1)$ with respect to $I$.
If $\oK$ is in addition  nilpotent
\begin{equation}
(\oK)^2=0 \label{eq:nilpotent}
\end{equation}

and $\oK$ and $\oKbar$ satisfy the reality condition
\begin{equation}
\oK (\phi j)= (\oKbar\phi)j, \quad \phi\in\Gamma(\trivialC 2),   \label{eq:reality on forms}
\end{equation}

then $d_\lambda$ is the associated family of a harmonic map $N: M \to
S^2$.
\end{theorem}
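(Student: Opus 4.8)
The plan is to reconstruct the harmonic map from the data $(\oK,\oKbar)$ and then invoke Theorem \ref{thm:family of flat connections} in reverse. The target is to produce a complex structure $J$, that is a map $N\colon M\to S^2$ with $J\phi=N\phi$, whose Hopf fields with respect to the trivial connection $d=d_1$ satisfy $\AK=\oK$ and $\AKbar=\oKbar$. Once this is done, $d_\lambda$ is by definition the associated family of $N$, and since $d_\lambda$ is flat for all $\lambda\in\C_*$, Theorem \ref{thm:family of flat connections} shows that $N$ is harmonic.

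First I would record the algebraic consequences of the hypotheses. The reality condition (\ref{eq:reality on forms}) together with $\lambda(\phi j)=(\bar\lambda\phi)j$ gives, exactly as in (\ref{eq:reality}), that $d_\lambda(\phi j)=(d_{\bar\lambda\invers}\phi)j$; in particular $d_1=d$ is the trivial connection and each $d_\lambda$ with $\lambda\in S^1$ is quaternionic. Applying $\oK$ twice to (\ref{eq:reality on forms}) and using (\ref{eq:nilpotent}) shows that $\oKbar$ is nilpotent as well. Collecting powers of $\lambda$ in the curvature of $d_\lambda$, and using that $\oK\wedge\oK=\oKbar\wedge\oKbar=0$ by type and nilpotency, flatness for all $\lambda\in\C_*$ is equivalent to
\[
d\oK = d\oKbar = \oK\wedge\oKbar + \oKbar\wedge\oK\,,
\]
where $d$ now denotes the induced exterior covariant derivative on endomorphisms.

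Next I would build $N$. By nilpotency $\im\oK\subset\ker\oK$, so $E:=\im\oK$ is a complex line subbundle with $\ker\oK=E$ away from the isolated zeros of $\oK$; the reality condition (\ref{eq:reality on forms}) identifies $\im\oKbar=Ej=\ker\oKbar$, so that $\trivialC2=E\oplus Ej$. I define $J$ to be the quaternionic complex structure with $\pm i$--eigenbundles $E$ and $Ej$, equivalently the map $N\colon M\to S^2$ with $J\phi=N\phi$. Its Hopf field $A$ then has, by (\ref{eq:im ker A oz}) and (\ref{eq:hopf with projection}), exactly the same block structure as $\oK,\oKbar$: the form $\AK$ maps $E^\perp\to E$ and $\AKbar$ maps $E\to E^\perp$, both of the correct $I$--type. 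At this stage $\oK$ and $\AK$ are merely proportional, both being $(1,0)$--forms valued in the line $\Hom(E^\perp,E)$, and it remains only to fix the scaling.

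The decisive step is to extract this scaling from flatness by projecting the identity $d\oK=\oK\wedge\oKbar+\oKbar\wedge\oK$ onto its $\End(E)$--component. Because $\oK$ kills $E$ and $\oKbar$ kills $E^\perp$, the term $\oKbar\wedge\oK$ lands in $\End(E^\perp)$ while $\oK\wedge\oKbar$ lies in $\End(E)$; on the other side, the $\End(E)$--part of $d\oK$ is $\oK\wedge\mathrm{II}_E$, where $\mathrm{II}_E=\pi_{E^\perp}\,d|_E$ is the second fundamental form of $E$. Thus $\oK\wedge\oKbar=\oK\wedge\mathrm{II}_E$, and since $\oK\colon E^\perp\to E$ is an isomorphism away from its zeros, the $(0,1)$--parts are forced to agree: $\oKbar=\mathrm{II}_E^{(0,1)}=\AKbar$. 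The reality conditions (\ref{eq:reality on forms}) and (\ref{eq:reality for hopf field}) then upgrade this to $\oK=\AK$, completing the identification, and $N$ extends smoothly across the zeros of $\oK$. I expect this scaling step to be the main obstacle: the construction of $E$ uses only the direction of $\oK$, and it is precisely the block--diagonal part of the flatness equation, rather than nilpotency or the reality condition alone, that pins $\oK$ and $\oKbar$ to the Hopf fields of $N$.
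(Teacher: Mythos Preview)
Your approach is essentially the paper's: define $E$ from $\oK$, build $J$ from $E$, and use the $\lambda$--coefficient of the flatness equation to pin $\oKbar$ to $A^{(0,1)}$. In the paper this same step reads: for $\phi\in\Gamma(E)$ the relation $d\oK=[\oKbar\wedge\oK]$ gives $\oK\wedge(d\phi-\oKbar\phi)=0$, and after projecting to $E^\perp$ one obtains $\oKbar=A^{(0,1)}$ on $E$. Your block--decomposition phrasing (``$\End(E)$--component of $d\oK$ equals $\oK\wedge\mathrm{II}_E$'') is exactly the same computation.

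There is, however, one genuine gap. You assert that $E:=\im\oK$ is a line subbundle and that ``$N$ extends smoothly across the zeros of $\oK$'', but neither is automatic: at a zero of $\oK$ the image collapses and nothing in your argument controls $E$ near such a point, so your $J$ is only defined on an open subset and the identification $\oK=A^{(1,0)}$ does not yet give a harmonic map on all of $M$. The paper handles this \emph{first}: the flatness equation $d\oK-[\oKbar\wedge\oK]=0$ says precisely that $\oK$ is a holomorphic section of $K\End_\C(\trivialC2)$ for the holomorphic structure $D\omega=d\omega-[\oKbar\wedge\omega]$, and the kernel of a nontrivial holomorphic endomorphism extends across its (isolated) zeros to a holomorphic line bundle. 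This is the step you are missing.

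A smaller point: the claim $\mathrm{II}_E^{(0,1)}=A^{(0,1)}$ hides the identity $Q^{(0,1)}|_E=0$, since in your notation $\mathrm{II}_E=d_-|_E=(A+Q)|_E$. This vanishing follows from $*Q=QJ$ together with $J|_E=I$, and the paper makes it explicit; you should too.
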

\begin{proof}
The kernel of $\oK$ defines a line bundle $E=\ker \oK$ away from the
zeros of $\oK$. To show that $E$ extends smoothly across the zeros of
$\oK$, we equip the bundle $K\End_\C(\trivialC 2)$ with a complex
holomorphic structure $D$ such that $\oK$ is a holomorphic section in
$(K\End_\C(\trivialC 2), D)$.

First note that a section $\sigma\in\Gamma(\bar K K)$ can be
identified with the 2--form $\hat\sigma\in \Omega^2(M)$ by setting
$\hat\sigma(X,Y) = \sigma(X,Y) -\sigma(Y,X)$ for
$X,Y\in\Gamma(TM)$. Under this identification, a complex holomorphic
structure on the canonical bundle $K$ is a complex linear operator $D:
\Gamma(K) \to \Omega^2(M)$
satisfying the Leibniz rule
\[
D(\omega\lambda) = (D\omega)\lambda - \omega\wedge d\lambda
\]
for $\omega\in \Gamma(K)$ and $\lambda: M \to \C$.  In particular,
when tensoring the canonical bundle $K$ with $\End_\C(\trivialC 2)$,
we see that for every $\eta\in\Gamma(\bar K \End_\C(\trivialC 2)$ the
operator $D: \Gamma(K\End_\C(\trivialC 2)) \to
\Omega^2(\End_\C(\trivialC 2))$,
\[
D\omega = d\omega - [\eta\wedge\omega]\,, \quad \omega\in
\Gamma(K\End_\C(\trivialC 2))\,, 
\]
is a complex holomorphic structure on $K\End_\C(\trivialC 2)$.  

Since $d_\lambda$ is flat for all $\lambda\in\C_*$ and $d$ is the
trivial connection we have
\[
0 =R_\lambda = (\lambda-1)d\oK +  (\lambda\invers-1)d\oKbar + (\lambda\invers-1)(\lambda-1)[\oKbar\wedge \oK]
\]
where we used that $ (\oK)^2=(\oKbar)^2 = 0$ by (\ref{eq:nilpotent})
and (\ref{eq:reality on forms}).  In particular, taking the
$\lambda$--coefficient we have
\[
0 = d\oK - [\oKbar \wedge \oK]\,,
\]
that is, $\oK\in\Gamma(K\End_\C(\trivialC 2))$ is a holomorphic
section with respect to the holomorphic structure $D=d-\oKbar$. But
then $\ker \oK$ can be extended across the zeros of $\oK$ into a
holomorphic line bundle $E$ over $M$.

We now define a complex structure $J\in\Gamma(\End(\trivial{}))$ on
$\trivial{} = E\oplus E^\perp$ by setting $J|_E= I$ and $J|_{E^\perp} =-I$. Note that
$J$ is quaternionic linear since $E^\perp = Ej$. If we decompose $d =
d_+ + d_-$ into $J$ commuting and anti--commuting parts, then $d_- =
\frac 12J(dJ)$, and $E$ and $E^\perp$ are $d_+$ stable, whereas $d_-$
maps $E$ to $E^\perp$ and vice versa.  For $\phi\in\Gamma(E)$ we
have
\[
0= (d-\oKbar)(\oK\phi) = -  \oK \wedge (d\phi - \oKbar\phi)
\]
where we used that $\oK$ is holomorphic with respect to $d-\oKbar$ and
$E\subset \ker\oK$.  Decomposing $d\phi = (d\phi)\oz + (d\phi)\zo$
into $(1,0)$ and $(0,1)$--part with respect to $I$, we see by type
arguments that
\[
\oK\wedge (d\phi)\oz =0
\]
and $(d\phi)\zo -\oKbar\phi$ is a 1--form with values in $E$.  Now
(\ref{eq:nilpotent}) shows $\im \oK \subset E$ and the reality
condition (\ref{eq:reality on forms}) gives $\im \oKbar \subset
E^\perp$.  Since $E$ is $d_+$ stable 
we thus see  for $\phi\in\Gamma(E)$ that
\begin{equation}
\label{eq:a01=omega01}
0= \pi_{E^\perp} ((d \phi)\zo - \oKbar\phi) = (\pi_{E^\perp} (d_-\phi)\zo) - \oKbar \phi\,.
\end{equation}
Recall (\ref{eq:d-}) that $d_- = A + Q$ and observe that
\[
Q\zo \phi = \frac 12(Q + *Q I)\phi = 0
\]
since $J|_E = I$ and (\ref{eq:type of Hopf fields})
holds. Substituting into (\ref{eq:a01=omega01}) we see
\[
 \oKbar  = A\zo
\]
on $E$. Since $E\subset \ker\oK$ the reality condition
(\ref{eq:reality on forms}) shows $E^\perp \subset \ker \oKbar$ and
from (\ref{eq:im ker A oz}) we thus see that $\oKbar = A \zo$ on
$\trivialC 2=E \oplus E^\perp$.  Using the reality conditions
(\ref{eq:reality on forms}) and (\ref{eq:reality for hopf field}) we
also have $\oK = A\oz$ on $\trivialC 2$.  In other words, $d_\lambda$
is the associated family of complex connections (\ref{eq:dlambda}) of
the map $N: M \to S^2$ which is given by the complex structure $J$. In
particular, since $d_\lambda$ is flat for all $\lambda\in\C_*$ the map
$N$ is harmonic by Theorem \ref{thm:family of flat connections}.
\end{proof}
\begin{rem}
\label{rem:complex structure of family of flat connections}
Let $d_\lambda$ be a family of flat connections satisfying the
assumptions of Theorem \ref{thm:family gives harmonic}. From the
previous proof we see that the associated harmonic map $N$ of
$d_\lambda$ has complex structure $J$ with $J|_E = I$ and $J_{E^\perp}
= -I$. Here $E$ is the line bundle defined by the kernel of $\oK$.
\end{rem}

To obtain families of flat connections $d_\lambda$ of the form
(\ref{eq:general harmonic connection}) we observe:
\begin{lemma}
\label{lem:connection of correct type}
  Let $d_\lambda$, $\lambda\in\C_*$, be a family of connections on $\trivialC
  2$ satisfying
\begin{enumerate}
\item the reality condition (\ref{eq:reality}) 
\[
d_\lambda(\phi j) = (d_{\bar\lambda\invers} \phi)j \quad \text{ for } \quad \phi\in\Gamma(\trivialC 2)\,,
\] 
\item the $(0,1)$--part of $d_\lambda$ can be extended to a meromorphic map $\lambda \mapsto d^{(0,1)}_\lambda$ on $\CP^1$ which is holomorphic on $\C_*\cup \infty$
  and has a simple pole at $0$, and \\
\item $d_{\lambda=1}=d$.
\end{enumerate}
Then the family of connections $d_\lambda$ is of the form
\[
d_\lambda = d + (\lambda -1) \oK + (\lambda\invers-1)\oKbar
\]

 with $\oK \in \Gamma(K\End_\C(\trivialC 2))$ and $\oKbar\in\Gamma(\bar K \End_\C(\trivialC 2))$.

\end{lemma}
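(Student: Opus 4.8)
The plan is to write $d_\lambda = d + \alpha_\lambda$ with connection form $\alpha_\lambda \in \Omega^1(\End_\C(\trivialC 2))$ and to pin $\alpha_\lambda$ down by splitting it into its $(1,0)$ and $(0,1)$ parts $\alpha_\lambda = \alpha_\lambda\oz + \alpha_\lambda\zo$ with respect to $I$. Since the trivial connection $d$ has a fixed $(0,1)$ part $\delbar$, assumption (ii) says precisely that $\alpha_\lambda\zo = d^{(0,1)}_\lambda - \delbar$ extends to a map on $\CP^1$ that is holomorphic on $\C_* \cup \{\infty\}$ with a single simple pole at $0$. An $\End_\C(\trivialC 2)$--valued form with this pole behaviour is rational in $\lambda$ with no other poles, hence of the shape $\alpha_\lambda\zo = \beta + \lambda\invers\gamma$ for two $\lambda$--independent forms $\beta,\gamma$; evaluating at two values of $\lambda$ exhibits $\beta,\gamma$ as linear combinations of $(0,1)$--forms, so $\beta,\gamma \in \Gamma(\bar K\End_\C(\trivialC 2))$. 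Assumption (iii) forces $\alpha_{\lambda=1} = 0$, hence $\beta + \gamma = 0$, and setting $\oKbar := \gamma$ gives $\alpha_\lambda\zo = (\lambda\invers - 1)\oKbar$.

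For the $(1,0)$ part I would feed this into the reality condition (i). Subtracting the quaternionic--linear trivial connection (which satisfies $d(\phi j) = (d\phi)j$) from both sides of (i) yields $\alpha_\lambda(\phi j) = (\alpha_{\bar\lambda\invers}\phi)j$ for all sections $\phi$. Right multiplication by $j$ anticommutes with $I$, so it interchanges the types $(1,0)$ and $(0,1)$: concretely, for a $(0,1)$--form $\eta$ the form $\phi \mapsto (\eta\phi)j$ is of type $(1,0)$, exactly as in (\ref{eq:reality for hopf field}). Comparing the $(1,0)$ parts of the two sides of the reality identity therefore gives $\alpha_\lambda\oz(\phi j) = (\alpha_{\bar\lambda\invers}\zo\phi)j = ((\bar\lambda - 1)\oKbar\phi)j$, using the formula for $\alpha\zo$ already obtained and $(\bar\lambda\invers)\invers = \bar\lambda$. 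I then define the $(1,0)$--form $\oK$ by the reality relation $\oK(\phi j) = (\oKbar\phi)j$ and convert the scalar with the quaternionic identity $jz = \bar z\,j$ for $z\in\C$; pushing the endomorphism $\lambda - 1$ past the $j$ in $(\oKbar\phi)j$ turns it into $\bar\lambda - 1$, so that $(\lambda-1)\oK(\phi j) = ((\bar\lambda-1)\oKbar\phi)j = \alpha_\lambda\oz(\phi j)$. Since $\phi \mapsto \phi j$ is a bijection this yields $\alpha_\lambda\oz = (\lambda-1)\oK$, and combining the two parts gives $d_\lambda = d + (\lambda-1)\oK + (\lambda\invers-1)\oKbar$ as claimed.

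The routine steps are the rational normal form in the first paragraph and the type decomposition in the second. The one delicate point, and the step I expect to be the main obstacle, is the scalar bookkeeping in the reality argument: the reality condition produces the parameter $\bar\lambda\invers$, so a priori the $(1,0)$ part would carry a factor $(\bar\lambda - 1)$ rather than the desired $(\lambda-1)$. It is precisely the conjugation identity $jz = \bar z\,j$, applied when $\lambda - 1 = (a-1)+Ib$ is commuted past the $j$ in $(\oKbar\phi)j$, that converts $\bar\lambda$ back into $\lambda$ and matches the coefficient to the one found in the $(0,1)$ part. Keeping straight whether $\lambda$ acts on the left as an endomorphism of $\trivialC 2$ or on the right as the quaternion $a+bi$ is where the care is needed.
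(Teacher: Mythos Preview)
Your proof is correct and follows essentially the same route as the paper's: first pin down the $(0,1)$--part from the pole structure and the normalisation at $\lambda=1$, then recover the $(1,0)$--part from the reality condition. Your write-up is more explicit where the paper is terse --- you spell out the rational normal form $\beta+\lambda^{-1}\gamma$ directly rather than factoring off $(\lambda^{-1}-1)$ and invoking Liouville, and you work through the conjugation identity $jz=\bar z\,j$ that converts $\bar\lambda-1$ into $\lambda-1$ --- but the underlying argument is the same.
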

\begin{proof}
  Write $d_\lambda = d+ \omega_\lambda$ with connection form
  $\omega_\lambda\in\Omega^1(\End_\C(\trivialC 2))$.  The conditions
  \thetag{ii} and \thetag{iii} imply that the $(0,1)$--part of $d_\lambda$
  is given by
\begin{comment}
  More details: $f$ holomorphic on $\C_*\cup \infty$ with $f(1) = f_1$
  implies $f= f_1 + h$ with $h(1)=0$, that is $h =(\lambda-1) g$ with $g$
  holomorphic. If $f$ has a simple pole at $0$ so has $g$, and
  $g=\lambda\invers \tilde g$ for some holomorphic function $\tilde
  g$. Combining this we see $f= f_1 -( \lambda\invers-1) \tilde g$. The
  map $\tilde g: \CP^1 \to \C$ is holomorphic, thus constant.
\end{comment}
$ d^{(0,1)} + (\lambda\invers-1)\oKbar_\lambda$ with
$\oKbar_\lambda\in\Gamma(\bar K\End_\C(\trivialC 2))$ for $\lambda\in\CP^1$.
Now $\lambda\mapsto \oKbar_\lambda$ is holomorphic on the compact $\CP^1$ thus
$\oKbar= \oKbar_\lambda$ is independent of $\lambda$. Finally, by the reality
condition (\ref{eq:reality}) the $(1,0)$-part of $d_\lambda$ is given by
$d\oz + (\lambda-1)\oK$ with $\oK(\phi j) = (\oKbar \phi)j$ for
$\phi\in\Gamma(\trivialC 2)$.

\end{proof}

By combining the previous results we can construct new harmonic maps
by gauging the associated family of flat connections of a given
harmonic map $N: M \to S^2$ with an appropriate $\lambda$--dependent
map. From this point on we will assume that $N$ is non--trivial: if
$N$ is not a constant harmonic map, then we can assume without loss of
generality, by passing to $-N$ if necessary, that the associated
family (\ref{eq:dlambda}) of flat connections $d_\lambda\not=d$ is
non--trivial.

\begin{theorem}[Dressing]
\label{thm:dressing}
Let $N: M \to S^2$ be a non--trivial harmonic map from a Riemann
surface $M$ into the 2--sphere and $d_\lambda$ the associated family
(\ref{eq:dlambda}) of flat connections.  If $r_\lambda : M \to
\Gl(2,\C)$ is a smooth map into the regular matrices with
\begin{enumerate}
\item $r_1 =\id$ is the identity matrix for $\lambda=1$, \\
\item $\lambda \to r_\lambda$ is meromorphic on $\CP^1$ and holomorphic at $0$
  and $\infty$, \\
\item $r_\lambda$ satisfies the (generalized) reality condition
\[
(r_\lambda\phi)j \sigma_\lambda=r_{\bar\lambda\invers}(\phi j)\,,
\quad \phi\in \trivialC 2\,,
\]
with $\sigma_\lambda\in\C_*$, and \\
\item the map $\lambda\to \hat d_\lambda$ is holomorphic on $\C_*$
  where $\hat d_\lambda = r_\lambda \cdot d_\lambda$ is the connection
  obtained by gauging $d_\lambda$ by the dressing matrix $r_\lambda$,
\end{enumerate}

then $\hat d_\lambda$ is the associated family (\ref{eq:dlambda}) of a harmonic map $\hat N:
M \to S^2$.   The harmonic map $\hat N$ is called the
\emph{dressing} of $N$ by $r_\lambda$.
\end{theorem}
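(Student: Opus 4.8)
\section*{Proof proposal}

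The plan is to recognise the gauged family $\hat d_\lambda$ as the associated family of a harmonic map by checking, in turn, the hypotheses of Lemma~\ref{lem:connection of correct type} and then those of Theorem~\ref{thm:family gives harmonic}. Writing $d_\lambda = d + \alpha_\lambda$ with $\alpha_\lambda = (\lambda-1)A\oz + (\lambda\invers-1)A\zo$ as in \eqref{eq:connection form}, the gauged connection has connection form
\[
\hat\alpha_\lambda = r_\lambda\, \alpha_\lambda\, r_\lambda\invers - (dr_\lambda)\, r_\lambda\invers\,,
\]
where the $dr_\lambda$--term is holomorphic in $\lambda$ wherever $r_\lambda$ and $r_\lambda\invers$ are. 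Since gauging conjugates curvature, $\hat R_\lambda = r_\lambda R_\lambda r_\lambda\invers$, and $d_\lambda$ is flat for all $\lambda\in\C_*$ by Theorem~\ref{thm:family of flat connections}, the family $\hat d_\lambda$ is already flat for all $\lambda\in\C_*$.

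First I would verify the three conditions of Lemma~\ref{lem:connection of correct type}. The normalisation \thetag{iii} is immediate: at $\lambda=1$ we have $\alpha_1 = 0$ and, since by hypothesis (i) the matrix $r_1=\id$ is constant on $M$, also $dr_1 = 0$, so $\hat d_1 = d$. For the meromorphicity \thetag{ii} I would split $\hat\alpha_\lambda$ into $(1,0)$-- and $(0,1)$--parts with respect to $I$; as $r_\lambda$ is a $0$--form, conjugation preserves type, so the $(0,1)$--part reads $(\lambda\invers-1)\,r_\lambda A\zo r_\lambda\invers - (\delbar r_\lambda) r_\lambda\invers$. By hypothesis (ii) the maps $r_\lambda$ and $r_\lambda\invers$ are holomorphic and invertible at $0$ and $\infty$, and $(\lambda\invers-1)$ has a simple pole at $0$ and is holomorphic at $\infty$; together with the holomorphicity of $\hat d_\lambda$ on $\C_*$ from hypothesis (iv), this shows the $(0,1)$--part extends meromorphically to $\CP^1$, holomorphic on $\C_*\cup\infty$ with at most a simple pole at $0$. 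The reality condition \thetag{i} for $\hat d_\lambda$ is the delicate point: I would combine the reality condition (iii) on $r_\lambda$ with the reality condition \eqref{eq:reality} of $d_\lambda$, using that both $d_\lambda$ and $\hat d_\lambda$, being complex connections, commute with right multiplication by the point--independent scalar $\sigma_\lambda$. Pushing the quaternion $j$ past $r_\lambda\invers$ produces a factor $\sigma_{\bar\lambda\invers}\invers$, while pushing it back past $r_\lambda$ produces the reciprocal factor $\sigma_{\bar\lambda\invers}$, so the two scalars cancel and one obtains exactly $\hat d_\lambda(\phi j) = (\hat d_{\bar\lambda\invers}\phi)j$.

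Lemma~\ref{lem:connection of correct type} then presents $\hat d_\lambda = d + (\lambda-1)\hatAK + (\lambda\invers-1)\hatAKbar$ with $\hatAK$ of type $(1,0)$, $\hatAKbar$ of type $(0,1)$, and the induced form--reality $\hatAK(\phi j) = (\hatAKbar\phi)j$. To invoke Theorem~\ref{thm:family gives harmonic} it remains to establish nilpotency and non--triviality of $\hatAK$. Here I would use that, by the Lemma, $\hatAK$ is independent of $\lambda$: comparing $(1,0)$--parts gives $(\lambda-1)\hatAK = (\lambda-1)\,r_\lambda A\oz r_\lambda\invers - (\del r_\lambda)r_\lambda\invers$, and dividing by $(\lambda-1)$ and letting $\lambda\to\infty$, where $r_\lambda\to r_\infty$ is holomorphic and invertible, yields the closed form $\hatAK = r_\infty A\oz r_\infty\invers$. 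Since $(A\oz)^2 = 0$, conjugation gives $(\hatAK)^2 = 0$, and since $r_\infty$ is invertible while $A\oz\neq 0$ (as $N$ is non--trivial) we get $\hatAK\neq 0$; the analogous limit $\lambda\to 0$ yields $\hatAKbar = r_0 A\zo r_0\invers\neq0$. All hypotheses of Theorem~\ref{thm:family gives harmonic} now hold, so $\hat d_\lambda$ is the associated family of a harmonic map $\hat N:M\to S^2$.

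The main obstacle I anticipate is the bookkeeping in the reality condition \thetag{i}: unlike conjugation by $r_\lambda$, the scalar $\sigma_\lambda$ and the anti--linear right multiplication by $j$ interact, and one must confirm that the two stray factors of $\sigma$ are genuinely reciprocal, so that $\hat d_\lambda$ satisfies the reality condition on the nose rather than up to a scalar. By contrast, nilpotency comes essentially for free once one recognises $\hatAK$ as a $\lambda$--independent object that may be evaluated in the limit $\lambda\to\infty$.
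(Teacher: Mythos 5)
Your proposal is correct and follows essentially the same route as the paper: verify the hypotheses of Lemma~\ref{lem:connection of correct type} (reality via the cancellation of the $\sigma$--factors, pole structure from (ii) and (iv), normalisation from (i)), then identify $\oK=\Ad(r_\infty)\AK$ and $\oKbar=\Ad(r_0)\AKbar$ by the limits $\lambda\to\infty$ and $\lambda\to 0$, giving nilpotency and the form--reality needed for Theorem~\ref{thm:family gives harmonic}. Your only departures are cosmetic ones in the paper's favour: you spell out the reciprocal scalar bookkeeping and explicitly check non--triviality of $\oK$, which the paper leaves implicit.
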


\begin{proof} 
  We first show that $\hat d_\lambda$ satisfies the assumptions of Lemma
  \ref{lem:connection of correct type}.  Since the reality condition
  \thetag{iii} gives $r_\lambda\invers(\phi j) =
  (r_{\bar\lambda\invers}\invers \phi)\sigma_\lambda j$ we obtain,
  together with $r_\lambda\cdot d_\lambda = r_\lambda\circ d_\lambda
  \circ r_\lambda\invers$ and the reality condition (\ref{eq:reality})
  for $d_\lambda$, that $\hat d_\lambda(\phi j)= (\hat d_{\bar\lambda\invers}\phi)j$.
 
  From \thetag{iv} we see that $\lambda\mapsto \hat d_\lambda^{(0,1)}$
  is holomorphic for $\lambda\in\C_*$, and \thetag{ii} shows that this
  map extends holomorphically at $\infty$ and has a simple pole at
  $\lambda=0$. Finally $\hat d_{\lambda=1} = d_{\lambda=1} = d$ by
  \thetag{i}, and we can apply Lemma \ref{lem:connection of correct
    type} to get
\[
\hat d_\lambda = d + (\lambda - 1) \oK +
  (\lambda\invers-1)\oKbar\,
\]
with $\oK\in\Gamma(K\End_\C(\trivialC 2)), \oKbar\in\Gamma(\bar
K \End_\C(\trivialC 2))$.  Since $\lim_{\lambda\to \infty}\lambda\invers d_\lambda= \AK$
and $r_\lambda$ is holomorphic at $\infty$ we see that
\begin{equation}
\label{eq:oK with rinfty}
\oK = \lim_{\lambda\to \infty} \lambda\invers \hat d_\lambda = \lim_{\lambda\to
  \infty} r_\lambda \cdot \lambda\invers d_\lambda = \Ad(r_\infty) \AK
\end{equation}
and, similarly, $\oKbar = \Ad(r_{0})\AKbar$.
In particular, this shows $(\oKbar)^2=(\oK)^2 =0$, and since $\hat
d_\lambda$ satisfies the reality condition (\ref{eq:reality}) we also have
the reality condition (\ref{eq:reality on forms}) for the 1--forms
$\oK$ and $\oKbar$.  Therefore, the family of flat connections $\hat d_\lambda$ defines with Theorem \ref{thm:family gives harmonic} a harmonic
map $\hat N: M \to S^2$. 
\end{proof}

\begin{rem}
\label{rem:dressing complex structure}

By Remark \ref{rem:complex structure of family of flat connections}
the associated complex structure $\hat J$ of the family of flat
connections $\hat d_\lambda$ is given by the quaternionic extension of
$\hat J|_{\hat E} = I$ where $\hat E$ is given by the kernel of
$\oK$. Equation (\ref{eq:oK with rinfty}) shows $\hat E \subset
\ker(\Ad(r_\infty)\AK)$, and recalling that the $+i$ eigenspace $E$ of
$J$ satisfies $E\subset \ker \AK$, with equality away from the zeros of
$\ker \AK$, we see
\[
\hat E
= r_\infty E\,.
\]
Thus for $\hat\phi = r_\infty\phi\in\hat E$ we obtain
\[
\hat J\hat \phi = r_\infty\phi i = r_\infty J\phi\,,
\]
in other words, the complex structure $\hat J$ is given by extending
$\hat J|_{\hat E} = \Ad(r_\infty)J|_{\hat E}$ quaternionically.

\end{rem}

A particular dressing is given by prescribing that $r_\lambda$ has
only a simple pole in $\lambda$ on $\C_*$:

\begin{example}[Simple factor dressing]
\label{ex: simple factor dressing}
Let $N: M \to S^2$ be a non--trivial harmonic map from a Riemann
surface into the 2--sphere with associated family $d_\lambda$ of flat
connections (\ref{eq:dlambda}).  Fix $\mu\in\C_*$ and let $M_\mu$ be a
$d_\mu$ parallel line subbundle of the trivial $\ttrivialC 2=\tilde M
\times\C^2$ bundle over the universal cover $\tilde M$ of $M$.
Denoting by $\pi_\mu$ and $\pi_\mu^\perp$ the projections on $M_\mu$
and $M_\mu^\perp$ with respect to the splitting $ \ttrivialC 2 = M_\mu
\oplus M_\mu^\perp$ we define
\[
r_\lambda =   \pi_\mu  \circ \gamma_\lambda + \pi_\mu^\perp
\]
where  $\gamma_\lambda $ is the complex linear endomorphism given by
\[
\gamma_\lambda =  \frac{1-\bar\mu\invers}{1-\mu} \frac{\lambda-\mu}{\lambda-\bar\mu\invers} \in \End_\C(\C^2)\,.
\]
Note that $r_\lambda=\id$ for $\mu\in S^1$ so that $r_\lambda$
trivially satisfies the conditions of Theorem
\ref{thm:dressing}. Therefore, we will from now on assume that
$\mu\not\in S^1$.  Since $ \pi_\mu( \varphi j) = (\pi_\mu^\perp
\varphi) j $ and $ \bar\gamma_\lambda\invers =
\gamma_{\bar\lambda\invers} $ we see that the reality condition
\thetag{iii} in Theorem \ref{thm:dressing}
\[
(r_\lambda\phi)j \gamma_{\bar \lambda\invers} = r_{\bar\lambda\invers}(\phi j)
\]
holds for $\phi\in\trivialC 2$. As we have seen before this implies
that the reality condition (\ref{eq:reality}) holds for $\hat
d_\lambda = r_\lambda\cdot d_\lambda$. Moreover, $\lambda \mapsto
r_\lambda$ is meromorphic on $\C_*$ with simple zero at $\lambda=\mu$
and simple pole at $\lambda=\bar\mu\invers$, and $\lambda\mapsto
r_\lambda$ is holomorphic at $0$ and $\infty$ which in particular
shows \thetag{ii} of Theorem \ref{thm:dressing}.

The condition \thetag{i} of Theorem \ref{thm:dressing}, that is
$r_1=\id$, trivially holds.  Therefore, it only remains to verify the
holomorphicity of $\hat d_\lambda$, see \thetag{iv} of Theorem
\ref{thm:dressing}. The only issue is at $\lambda=\mu$ and
$\lambda=\bar \mu\invers$ but, since $\hat d_\lambda$ satisfies the
reality condition (\ref{eq:reality}), it is enough to consider
$\lambda=\mu$. We express $d_\lambda$ in terms of $d_\mu$ as
\[
d_\lambda = d_\mu + (\lambda-\mu)\AK + (\lambda\invers-\mu\invers)\AKbar
\]
so that
\[
\hat d_\lambda = r_\lambda \cdot d_\mu + (\lambda-\mu)\Ad(r_\lambda)\AK
+ \frac{\mu-\lambda}{\mu\lambda}\Ad(r_\lambda)\AKbar\,.
\]
We observe that $\Ad(r_\lambda)$ has only a simple pole at $\mu$ which
shows that $ (\lambda-\mu)\Ad(r_\lambda)\AK +
\frac{\mu-\lambda}{\mu\lambda} \Ad(r_\lambda)\AKbar $ is holomorphic
at $\lambda =\mu$. Finally, we decompose $d_\mu$ with respect to the
splitting $\trivialC 2= M_\mu \oplus M_\mu^\perp$
\[
d_\mu= \calD + \beta
\]
into a differential operator $\calD$ which leaves $M_\mu$ and
$M_\mu^\perp$ parallel, and a tensor $\beta$ mapping $M_\mu $ to $
M_\mu^\perp$ and vice versa. By assumption $M_\mu$ is
$d_\mu$--parallel, and thus $\beta|_{M_\mu} = 0$ and
$\Ad(r_\lambda)\beta=\gamma_\lambda \beta$.

Since $M_\mu$ and $M_\mu^\perp$ are $\calD$ stable, we see
$r_\lambda\cdot \calD = \calD$ so that
\[
\lambda \mapsto r_\lambda \cdot d_\mu= \calD + \gamma_\lambda \beta
\]
is holomorphic in $\lambda=\mu$. This shows that $\lambda\mapsto \hat
d_\lambda$ is holomorphic in $\C_*$.

Therefore, $r_\lambda$ satisfies for all $\mu\in\C_*$ the assumptions
of Theorem \ref{thm:dressing}. In particular, we obtain for every
$\mu\in\C_*$ and every choice of $d_\mu$--parallel line bundle $M_\mu$
a harmonic map $\hat N: \tilde M \to S^2$, the \emph{simple factor
  dressing of} $N$ given by $\mu$ and $M_\mu$.  Note that for $\mu\in
S^1$ the simple factor dressing $\hat N=N$ is trivial.
\end{example}

\section{Simple factor dressing of constant mean curvature surfaces}

By the Ruh--Vilms Theorem \cite{ruh_vilms} an immersion $f: M \to
\R^3$ from a Riemann surface into $\R^3$ has constant mean curvature
if and only if its Gauss map $N: M \to S^2$ is harmonic.  In
particular, for a given constant mean curvature surface $f$ the Gauss
map $N$ gives a harmonic complex structure $J$ on the trivial
$\H$--bundle $\trivial{}$ and we have an associated $\C_*$--family of
flat complex connections (\ref{eq:dlambda})
\[
d_\lambda = d + (\lambda-1) A\oz + (\lambda\invers-1) A\zo
\]
on $\C^2=(\H, I)$ where $A = \frac 12 *(dJ)' $ is the Hopf field
(\ref{eq: hopf dJ}) of $J$.  Conversely, given the family of flat
connections of a harmonic map $N: M \to S^2$ one can reconstruct a
constant mean curvature surface $f$ by the Sym--Bobenko formula. We
briefly summarize this construction in our notation. Recall first that
the Gauss map $N: M \to S^2$ of a conformal immersion $f: M \to \R^3$
satisfies \cite{coimbra}
\[
*df = Ndf = - df N\,.
\]
The splitting of $dN = (dN)'+(dN)''$ into $(1,0)$ and $(0,1)$--part of
$dN$ with respect to $N$ is the decomposition of the shape operator
into trace and trace-free parts \cite{coimbra} so that
\begin{equation}
\label{eq: mean curvature}
(dN)' = - H df
\end{equation}
where $H$ is the mean curvature of $f$.  We will assume from now on
without loss of generality that constant mean curvature surfaces have
mean curvature $H=1$. In this case $A$ is the left multiplication by
$-\frac{*df}2$ since 
\begin{equation}
\label{eq:A=df}
-2*A\phi = (dJ)'\phi =(dN)'\phi =- df\phi, \quad
\phi\in\Gamma(\trivial{})\,.
\end{equation}
\quad

\begin{theorem}[Sym--Bobenko formula, \cite{sym_bob}]
\label{theorem:sym bobenko}
Let $f: M \to \R^3$ be a constant mean curvature surface with Gauss
map $N: M \to S^2$.  If $d_\lambda$ is the associated family
(\ref{eq:dlambda}) of complex flat connections of $N$ then $f$ is
locally given, up to translation, by
\begin{equation}
\label{eq: sym bobenko}
f =- 2 \left(\left.\frac{\partial}{\partial t} \varphi_{e^{it}}\right \vert_{t=0} \right)\varphi_{\lambda=1}\invers
\end{equation}
where $\varphi_\lambda\in\Gamma(\ttrivial{})$ are
$d_\lambda$--parallel sections on the universal cover $\tilde M$ of
$M$, depending smoothly on $\lambda=e^{it}\in S^1$. Conversely, every
non--trivial harmonic map $N: M\to S^2$ from a simply connected
Riemann surface $M$ into the 2--sphere gives by (\ref{eq: sym
  bobenko}) a constant mean curvature surface $f: \tilde M \to \R^3$
on the universal cover $\tilde M$ of $M$.
\end{theorem}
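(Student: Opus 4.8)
The plan is to obtain the formula by differentiating the parallel--section equation in the spectral parameter at $\lambda=1$. Write $d_\lambda=d+\alpha_\lambda$ with $\alpha_\lambda=(\lambda-1)A\oz+(\lambda\invers-1)A\zo$ as in (\ref{eq:connection form}), and let $\varphi_\lambda\in\Gamma(\ttrivial{})$ be the $d_\lambda$--parallel sections along $\lambda=e^{it}\in S^1$, so that $d\varphi_\lambda+\alpha_\lambda\varphi_\lambda=0$. Since $d_{\lambda=1}=d$ is the trivial connection, $\varphi_1$ is a constant quaternion and $\alpha_1=0$. First I would differentiate the parallel equation in $t$ at $t=0$, using $\frac{\partial}{\partial t}\lambda|_{t=0}=I$ and $\frac{\partial}{\partial t}\lambda\invers|_{t=0}=-I$ together with $[I,A]=0$, to get
\[
d\dot\varphi=-I(A\oz-A\zo)\varphi_1=-*A\,\varphi_1,
\]
where $\dot\varphi=\frac{\partial}{\partial t}\varphi_{e^{it}}|_{t=0}$ and I have used $A\oz-A\zo=-I*A$ together with $I^2=-1$.

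Next I would insert the geometry. By (\ref{eq:A=df}) the endomorphism $*A$ is left multiplication by $\tfrac12 df$, so the identity becomes $d\dot\varphi=-\tfrac12\,df\,\varphi_1$. As $\varphi_1$ is constant this reads $d(-2\dot\varphi\varphi_1\invers)=df$, which is precisely the assertion that $f=-2\dot\varphi\varphi_1\invers$ holds up to an additive constant, i.e.\ up to translation. To see that the right--hand side is $\R^3=\Im\H$--valued up to that constant, I would observe that $df$ is $\Im\H$--valued, so $\Re\big(d(\dot\varphi\varphi_1\invers)\big)=0$ and the real part of $-2\dot\varphi\varphi_1\invers$ is constant; prescribing a common value of $\varphi_\lambda$ at a base point normalises this constant to zero.

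For the converse I would start from a non--trivial harmonic $N\colon M\to S^2$ on a simply connected $M$. By Theorem \ref{thm:family of flat connections} the family $d_\lambda$ is flat, so a smooth family $\varphi_\lambda$ of $d_\lambda$--parallel sections exists on $M$, smoothness in $\lambda$ coming from the smooth dependence of the parallel--transport ODE on parameters; I would fix the translation freedom by a common initial value at a base point. Defining $f:=-2\dot\varphi\varphi_1\invers$, the computation above gives $df=2*A$, and it remains to verify that $f$ is a constant mean curvature immersion with Gauss map $N$. Writing $A$ as left multiplication by a quaternionic $1$--form, the type relations (\ref{eq:type of Hopf fields}), $*A=JA=-AJ$, force that form to anti--commute with $N$, hence to be $\Im\H$--valued and orthogonal to $N$; then $df=2*A$ is $\Im\H$--valued and satisfies $*df=Ndf=-df\,N$, which is conformality together with the condition that $N$ be the Gauss map. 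Comparing $(dN)'=-2*A=-df$ with (\ref{eq: mean curvature}) gives $H=1$, and non--triviality of $N$ forces $A\neq0$, so $f$ is an immersion.

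The underlying computation is short, and I expect the only genuine obstacle to be the reality statement: showing that $-2\dot\varphi\varphi_1\invers$ is, up to a real constant, an imaginary quaternion, and in the converse that the $1$--form representing $*A$ is imaginary and orthogonal to $N$. Both reduce to the anti--commutation $*A=JA=-AJ$, so this is the step I would be most careful with; the remainder is bookkeeping of the $(1,0)/(0,1)$ decomposition and the standard smooth dependence of parallel sections on the spectral parameter.
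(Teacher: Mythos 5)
Your proposal is correct and follows essentially the same route as the paper: differentiate the parallel--section equation in $t$ at $\lambda=1$ to obtain $d\dot\varphi=-*A\,\varphi_1$, then use (\ref{eq:A=df}) to identify $2*A$ with left multiplication by $df$, and for the converse use $*A=JA=-AJ$ to see that $N$ is the Gauss map of the (branched) conformal map $f$ with $df=2*A$. The only cosmetic difference is that you conclude $H=1$ directly by comparing with (\ref{eq: mean curvature}), whereas the paper invokes the Ruh--Vilms theorem at that point; the two steps are interchangeable.
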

\begin{proof}
  Let $d_\lambda$ be the associated family of flat connections of a
  non--trivial harmonic map $N: M \to S^2$ and
  $\varphi_\lambda\in\Gamma(\trivial{})$ a smooth family of sections
  with $d_\lambda\varphi_\lambda=0$.  With $\lambda=e^{it}\in S^1$ we
  obtain
\[
d\left(\left.\frac{\partial}{\partial t} \varphi_\lambda
\right \vert_{t=0}\right) =
\left.\frac{\partial}{\partial t} d\varphi_{\lambda} \right\vert_{t=0} =
I(A\zo - A\oz)\varphi_1
=- *A\varphi_1
\,.
\]
Now, if $f$ is a constant mean curvature surface then its Gauss map
$N$ has $(dN)'\not=0$ and $2*A$ is the left multiplication by
$df$. Thus $f$ is given, up to translation, by (\ref{eq: sym
  bobenko}).

Conversely, if $N$ is a non--trivial harmonic map then we may assume
that $(dN)'\not=0$, and the above computation shows that the map $f$
defined by (\ref{eq: sym bobenko}) satisfies $df\varphi = 2*A\varphi$
where $df$ does not vanish identically.  By (\ref{eq:type of Hopf
  fields}) we have $*df = N df = -df N$ so that $N$ is the Gauss map
of the (branched) conformal immersion $f$ and $f$ has constant mean
curvature by the Ruh--Vilms theorem.
\end{proof}

In particular, the associated family and the dressing of a harmonic
map $N: M \to S^2$ also induce new constant mean curvature surfaces.

\begin{cor}
  Let $f: M \to \R^3$ be a constant mean curvature surface and let
  $d_\lambda$ be the associated family of its Gauss map $N$.  For
  $\mu\in S^1$ let $\varphi\in\Gamma(\ttrivial{})$ be a
  $d_\mu$--parallel section and $N_{\varphi}=\varphi\invers N \varphi$
  the associated harmonic map. If $\varphi_\lambda$ is a smooth
  $S^1$--family of $d_\lambda$--parallel sections with
  $\varphi_{\lambda=\mu} = \varphi$ then
\[
f_{\varphi} = - 2 \varphi\invers
\left(\left.\frac{\partial}{\partial t}
    \varphi_{e^{it}}\right\vert_{t=s}\right)
\]
is a constant mean curvature surface $f_{\varphi}: \tilde M \to \R^3$
with Gauss map $N_{\varphi}$ where $\mu=e^{is}\in S^1$.

\end{cor}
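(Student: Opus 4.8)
The plan is to read off $f_\varphi$ as the Sym--Bobenko surface of the harmonic map $N_\varphi$, using the description of the associated family of $N_\varphi$ provided by Theorem \ref{thm: associated family}. By that theorem, for $\mu\in S^1$ the map $N_\varphi=\varphi\invers N\varphi$ is harmonic with respect to $d$, and its associated family of flat connections is the gauge $d_{\varphi,\lambda}=\Phi\invers\cdot d_{\lambda\mu}$, where $\Phi$ denotes left multiplication by $\varphi$. Since $N$ is the Gauss map of a constant mean curvature surface it is non--trivial, hence so is $N_\varphi$, and Theorem \ref{theorem:sym bobenko} applies to $N_\varphi$: the constant mean curvature surface with Gauss map $N_\varphi$ is obtained, up to translation, from formula (\ref{eq: sym bobenko}) applied to any smooth $S^1$--family $\psi_\lambda$ of $d_{\varphi,\lambda}$--parallel sections.

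The key step is to identify such parallel sections in terms of the given family $\varphi_\lambda$. Writing the gauge as $d_{\varphi,\lambda}=\Phi\invers\circ d_{\lambda\mu}\circ\Phi$, a section $\psi$ is $d_{\varphi,\lambda}$--parallel precisely when $\Phi\psi=\varphi\psi$ is $d_{\lambda\mu}$--parallel. As $\varphi_{\lambda\mu}$ is $d_{\lambda\mu}$--parallel by hypothesis, I would set
\[
\psi_\lambda=\varphi\invers\varphi_{\lambda\mu}\,.
\]
This is a smooth $S^1$--family, since $\lambda=e^{it}\mapsto\varphi_{\lambda\mu}=\varphi_{e^{i(t+s)}}$ is smooth, and at $\lambda=1$ it reduces, using $\varphi_{\lambda=\mu}=\varphi$, to $\psi_{\lambda=1}=\varphi\invers\varphi_\mu=1$, so that $\psi_{\lambda=1}\invers=1$.

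It then remains to substitute into the Sym--Bobenko formula and reparametrise. With $\mu=e^{is}$ one has $\psi_{e^{it}}=\varphi\invers\varphi_{e^{i(t+s)}}$, and since $\varphi\invers$ is independent of $t$ the chain rule gives
\[
\left.\frac{\partial}{\partial t}\psi_{e^{it}}\right\vert_{t=0}=\varphi\invers\left.\frac{\partial}{\partial t}\varphi_{e^{it}}\right\vert_{t=s}\,.
\]
Combining this with $\psi_{\lambda=1}\invers=1$ in formula (\ref{eq: sym bobenko}) for $N_\varphi$ yields exactly
\[
f_\varphi=-2\left(\left.\frac{\partial}{\partial t}\psi_{e^{it}}\right\vert_{t=0}\right)\psi_{\lambda=1}\invers=-2\,\varphi\invers\left(\left.\frac{\partial}{\partial t}\varphi_{e^{it}}\right\vert_{t=s}\right)\,,
\]
and Theorem \ref{theorem:sym bobenko} then guarantees that this surface has constant mean curvature with Gauss map $N_\varphi$. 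The only point requiring care is the bookkeeping: tracking the direction of the gauge $\Phi\invers\cdot(-)$ so that $\varphi\psi$, rather than $\varphi\invers\psi$, is the $d_{\lambda\mu}$--parallel object, and correctly shifting the base point of the associated family from $\lambda=1$ to $\lambda=\mu$ when passing between the two reparametrisations. Beyond this there is no analytic difficulty.
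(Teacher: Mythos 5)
Your proposal is correct and follows essentially the same route as the paper: it invokes Theorem \ref{thm: associated family} to identify $d_{\varphi,\lambda}=\Phi\invers\cdot d_{\lambda\mu}$ as the associated family of $N_\varphi$, observes that $\varphi\invers\varphi_{\lambda\mu}$ is $d_{\varphi,\lambda}$--parallel, and then applies the Sym--Bobenko formula with the chain-rule reparametrisation. The extra checks you make (non--triviality of $N_\varphi$, the value $\psi_{\lambda=1}=1$, the gauge direction) are exactly the details the paper leaves implicit.
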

\begin{proof}
  Recall from Theorem \ref{thm: associated family} that
  $d_{\varphi,\lambda }=\Phi\invers \cdot d_{\lambda \mu}$ is the
  associated family of $N_\varphi$ where $\Phi$ is the endomorphism
  given by left multiplication by $\varphi$. In particular,
  $\varphi^\mu_\lambda:= \varphi\invers \varphi_{\lambda\mu}$ is
  $d_{\varphi,\lambda}$ parallel, and the Sym--Bobenko formula shows
\[
f_\varphi = -2 \left.\left(\frac{\partial}{\partial t}\varphi^\mu_{e^{it}}\right)\right|_{t=0} (\varphi^\mu_{\lambda=1})\invers = -2 \varphi\invers \left(\left.\frac{\partial}{\partial t}
    \varphi_{e^{it}}\right\vert_{t=s}\right)\,.
\]

\end{proof}

\begin{cor}
\label{eq:simple factor cmc surface}
Let $f: M \to \R^3$ be a constant mean curvature surface, and let $N:
M \to S^2$ be the Gauss map of $f$. Then the dressing $\hat N$ of $N$
by $r_\lambda$ is the Gauss map of the constant mean curvature surface
\[
\hat f =  f -2 \left( \left(\left.\frac{\partial}{\partial t} r_{e^{it}}\right)\right\vert_{t=0}(\varphi_{\lambda=1})\right) \varphi_{\lambda=1}\invers\,,
\]
where $\varphi_\lambda\in\Gamma(\ttrivial{})$ are $d_\lambda$--parallel
sections, depending smoothly on $\lambda=e^{it}\in S^1$.
\end{cor}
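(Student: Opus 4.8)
The plan is to apply the Sym--Bobenko formula (Theorem \ref{theorem:sym bobenko}) directly to the dressed harmonic map $\hat N$ and then to rewrite the resulting expression in terms of the data of $f$ and the dressing matrix $r_\lambda$. By Theorem \ref{thm:dressing} we already know that $\hat N$ is harmonic and that its associated family of flat connections is the gauge $\hat d_\lambda = r_\lambda\cdot d_\lambda = r_\lambda\circ d_\lambda\circ r_\lambda\invers$. So the only real work is to feed a convenient family of parallel sections into the Sym--Bobenko formula and carry out a Leibniz expansion.

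First I would produce an explicit smooth family of $\hat d_\lambda$--parallel sections. The natural candidate is $\hat\varphi_\lambda := r_\lambda\varphi_\lambda$: since $\varphi_\lambda$ is $d_\lambda$--parallel, we get $\hat d_\lambda\hat\varphi_\lambda = r_\lambda d_\lambda r_\lambda\invers(r_\lambda\varphi_\lambda) = r_\lambda d_\lambda\varphi_\lambda = 0$. For $\lambda\in S^1$ the reality condition (\ref{eq:reality}) for $\hat d_\lambda$ forces $\hat d_\lambda$ to be quaternionic, so $\hat\varphi_\lambda$ is a genuine parallel section of the quaternionic bundle $\ttrivial{}$, and not merely of its complexification. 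Near $\lambda=1$ the matrix $r_\lambda$ is holomorphic with $r_1=\id$, hence $\hat\varphi_{e^{it}}$ depends smoothly on $t$ near $t=0$, which is all the Sym--Bobenko formula requires.

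Next I would apply Theorem \ref{theorem:sym bobenko} to $\hat N$ with these sections, obtaining a constant mean curvature surface with Gauss map $\hat N$ given, up to translation, by $\hat f = -2\big(\tfrac{\partial}{\partial t}\hat\varphi_{e^{it}}\big\vert_{t=0}\big)\hat\varphi_{\lambda=1}\invers$. Using $r_1=\id$ we have $\hat\varphi_{\lambda=1}=\varphi_{\lambda=1}$, and the Leibniz rule gives $\tfrac{\partial}{\partial t}\hat\varphi_{e^{it}}\big\vert_{t=0} = \big(\tfrac{\partial}{\partial t}r_{e^{it}}\big\vert_{t=0}\big)\varphi_{\lambda=1} + \tfrac{\partial}{\partial t}\varphi_{e^{it}}\big\vert_{t=0}$. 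Substituting and separating the two summands, the first term contributes the correction $-2\big(\big(\tfrac{\partial}{\partial t}r_{e^{it}}\big\vert_{t=0}\big)\varphi_{\lambda=1}\big)\varphi_{\lambda=1}\invers$, while the second term is exactly $f$ by the Sym--Bobenko formula (\ref{eq: sym bobenko}) for $N$ with the same sections $\varphi_\lambda$. This is the asserted identity, and the fact that $\hat f$ is a constant mean curvature surface in $\R^3=\Im\H$ with Gauss map $\hat N$ is part of the output of Theorem \ref{theorem:sym bobenko}.

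The computation itself is a one-line product-rule expansion, so I expect no analytic obstacle. The point demanding genuine care is the normalization of the translation constant: the Sym--Bobenko formula determines each of $f$ and $\hat f$ only up to a constant translation in $\R^3$, so I must stress that using the single common family $\varphi_\lambda$ (equivalently, the sections $\hat\varphi_\lambda = r_\lambda\varphi_\lambda$ for $\hat f$) pins down these constants compatibly, so that the equality $\hat f = f - 2\big(\big(\tfrac{\partial}{\partial t}r_{e^{it}}\big\vert_{t=0}\big)\varphi_{\lambda=1}\big)\varphi_{\lambda=1}\invers$ holds exactly rather than merely up to an ambient translation.
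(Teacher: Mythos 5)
Your proposal is correct and follows essentially the same route as the paper: both use Theorem \ref{thm:dressing} to identify $\hat d_\lambda = r_\lambda\cdot d_\lambda$ as the associated family of $\hat N$, observe that $\hat\varphi_\lambda = r_\lambda\varphi_\lambda$ is $\hat d_\lambda$--parallel, and then apply the Sym--Bobenko formula together with $r_1=\id$. Your write-up merely makes explicit the Leibniz expansion and the compatibility of translation constants that the paper's proof leaves implicit.
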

\begin{proof}
  First note that $\left(\left.\frac{\partial}{\partial t}
      r_{e^{it}}\right)\right\vert_{t=0}$ is in general not
  quaternionic linear.  We recall that the associated family of $\hat
  N$ is given by Theorem \ref{thm:dressing} by $\hat d_\lambda =
  r_\lambda \cdot d_\lambda$. Thus for a smooth $S^1$--family
  $\varphi_\lambda$ of $d_\lambda$--parallel sections we see that
  $\hat\varphi_\lambda = r_\lambda \varphi_\lambda$ is $\hat
  d_\lambda$--parallel. Now the Sym--Bobenko formula (\ref{eq: sym
    bobenko}) and $r_1=\id$ give the claim.
\end{proof}

Identifying $\R^4=\H$ with $\gl(2,\C)$--matrices of the form
$\left\{\begin{pmatrix} a & -\bar b\\ b & \bar a
  \end{pmatrix} \mid a, b\in\C\right\}
$
via 
\[
a_0 + ja_1 \mapsto \begin{pmatrix} a_0 & -\bar a_1\\ a_1 & \bar a_0
  \end{pmatrix}
\]
the inner product in $\R^3$ is given by $<v,w> = -\frac 12 \tr(vw)$
for $v,w\in\R^3$, and the coordinate frame of an immersion $f: M \to
\R^3$ is under this identification the unique (up to sign) smooth map
$F: \tilde M \to \SU(2,\C)$ with
\[
e^{-\frac u2} f_x = -iF\sigma_1 F\invers, \quad e^{-\frac u2} f_y = -iF\sigma_2 F\invers, \quad N = -iF\sigma_3 F\invers
\]
where $z = x+ iy $ is a conformal coordinate, $e^u$ is the induced
metric, and $\sigma_l$ are the Pauli--matrices 
\[
\sigma_1 =\begin{pmatrix} 0 &1\\1&0
\end{pmatrix}, \quad \sigma_2 = \begin{pmatrix} 0 &-i\\ i&0
\end{pmatrix}, \quad \text{ and } \quad \sigma_3= \begin{pmatrix} 1 &0\\ 0&-1
\end{pmatrix}\,;
\]
in particular,  $f_z =\frac 12(f_x - if_y)$ and $f_{\bar z} = \frac
12(f_x + if_y)$ are given by
\begin{eqnarray}
f_z &=& - i e^{\frac u2} F e_- F\invers \nonumber
\\
f_{\bar z} &=&   -i  e^{\frac u2} F e_+ F\invers \label{eq: df, N in frame} \\
 N &=&  -iF  \sigma_3F\invers\, \nonumber  
\end{eqnarray}
with $e_- =\begin{pmatrix} 0&0\\1&0
\end{pmatrix}$ and $e_+ =\begin{pmatrix} 0&1\\0&0
\end{pmatrix}$.  Since the metric and the mean curvature of an
immersion $f$ are given by $e^u = 2<f_z, f_{\bar z}>$ and
$H=2e^{-u}<f_{z\bar z}, N>$ respectively, the frame $F$ of a constant
mean curvature surface $f$ with mean curvature $H=1$ and Hopf
differential $Qdz$,
$Q =<f_{zz}, N>$,
satisfies with $<f_z,f_z> = <f_{\bar
  z}, f_{\bar z}>=0$ the equations
\begin{eqnarray*}
% U := 
F\invers F_z &=& \begin{pmatrix} -\frac 14  u_z &  Q e^{-\frac u2}\\-\frac 12 e^{\frac u2} & \frac 14 u_z
\end{pmatrix} %\label{eq:U}
\\[.3cm]
%V :=
F\invers F_{\bar z} &=& \begin{pmatrix} \frac 14  u_{\bar z} &\frac 12 e^{\frac u2} \\
 -  \bar Q e^{-\frac u2}& -\frac 14 u_{\bar z}
\end{pmatrix} 
%\label{eq:V}
\,.
\end{eqnarray*}
In particular, the Gauss--Codazzi equations for the constant mean
curvature surface $f$ are satisfied if and only if
\[
d^F := d + F\invers dF
\]
is a flat connection. Again, we can introduce the spectral parameter
$\lambda\in\C_*$:
\begin{lemma}
\label{lemma: lawson frame}
Let $f: M \to \R^3$ be a constant mean curvature surface with Gauss
map $N$ and coordinate frame $F$ and let $F_\lambda: \tilde M \to
\Gl(2,\C)$, $\lambda\in\C_*$, be an \emph{extended frame} of $f$, that
is a solution of
\begin{eqnarray}
F_\lambda\invers (F_\lambda)_z   &=& \begin{pmatrix} -\frac{u_z}4 &  Q e^{-\frac u2}\\ -\frac\lambda 2 e^{\frac u2} & \frac{u_z}4
\end{pmatrix}  \label{eq: U0} \\[.3cm]
F_\lambda\invers (F_\lambda)_{\bar z} &=& \begin{pmatrix} \frac{  u_{\bar z}}4 &\frac{\lambda\invers}2 e^{\frac u2} \\
 - \bar Q e^{-\frac u2}& -\frac{ u_{\bar z}}4
\end{pmatrix} \label{eq: V0}
\end{eqnarray}
with $F_{\lambda=1} = F$. Then $F_\lambda$ gives the associated family
(\ref{eq:dlambda}) of flat connections of the Gauss map $N$ of $f$ by
\begin{equation}
\label{eq:dlambda with frame}
 d_\lambda = F \cdot d^{F_\lambda}
\end{equation}
where $d^{F_\lambda} = d + F_\lambda\invers dF_\lambda$.
\end{lemma}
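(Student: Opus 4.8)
The plan is to recognise the right-hand side as a gauge transform of the trivial connection and then read off its connection form directly. First I note that $d^{F_\lambda}=d+F_\lambda\invers dF_\lambda$ is exactly the gauge $F_\lambda\invers\cdot d$ of the trivial connection by $F_\lambda\invers$; since gauging is a left action, the connection in question,
\[
d_\lambda:=F\cdot d^{F_\lambda}=(FF_\lambda\invers)\cdot d,
\]
is again a gauge of $d$. Writing $g=FF_\lambda\invers$ and $\omega_\lambda=F_\lambda\invers dF_\lambda$, a short computation with the paper's convention $g\cdot d=g\circ d\circ g\invers=d-(dg)g\invers$ gives the connection form
\[
\alpha_\lambda=-(dg)g\invers=-(dF)F\invers+F\omega_\lambda F\invers=F(\omega_\lambda-\omega_1)F\invers,
\]
where I used $\omega_1=F\invers dF$. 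In particular $\alpha_1=0$, matching $d_{\lambda=1}=d$.

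Next I would compute the difference $\omega_\lambda-\omega_1$ from the structure equations \eqref{eq: U0} and \eqref{eq: V0}. All $\lambda$-independent entries cancel and only the off-diagonal $\lambda$-dependent coefficients survive, leaving
\[
\omega_\lambda-\omega_1=-\frac{\lambda-1}2e^{\frac u2}e_-\,dz+\frac{\lambda\invers-1}2e^{\frac u2}e_+\,d\bar z.
\]
Conjugating by $F$ and invoking the frame identities \eqref{eq: df, N in frame}, which give $e^{\frac u2}Fe_-F\invers=if_z$ and $e^{\frac u2}Fe_+F\invers=if_{\bar z}$, I obtain
\[
\alpha_\lambda=-\frac i2(\lambda-1)f_z\,dz+\frac i2(\lambda\invers-1)f_{\bar z}\,d\bar z.
\]

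It then remains to identify the two summands with $(\lambda-1)A\oz$ and $(\lambda\invers-1)A\zo$. Since $z$ is conformal we have $*dz=I\,dz$ and $*d\bar z=-I\,d\bar z$ (recalling that $I$ acts on $\trivialC 2$ as the scalar $i$), so the $dz$-term is automatically of type $(1,0)$ and the $d\bar z$-term of type $(0,1)$ with respect to $I$. To match the coefficients I use \eqref{eq:A=df}, namely that $*A$ is left multiplication by $\frac12 df$ and hence $A=-\frac12*df$, together with $*df=if_z\,dz-if_{\bar z}\,d\bar z$; substituting into $A\oz=\frac12(A-I*A)$ and $A\zo=\frac12(A+I*A)$ and cancelling yields precisely $A\oz=-\frac i2 f_z\,dz$ and $A\zo=\frac i2 f_{\bar z}\,d\bar z$. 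Comparing with the displayed $\alpha_\lambda$ shows $d_\lambda=d+(\lambda-1)A\oz+(\lambda\invers-1)A\zo$, which is the associated family \eqref{eq:dlambda}; it is flat for every $\lambda\in\C_*$ by Theorem \ref{thm:family of flat connections}, since $N$ is harmonic by Ruh--Vilms.

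The matrix bookkeeping is routine; the one genuinely delicate point is keeping the two roles of $i$ apart --- the quaternionic complex structure $I$ (right multiplication by $\i$, which on $\C^2=(\H,I)$ is the scalar $i$) versus the factors of $i$ produced by the $\gl(2,\C)$-identification in \eqref{eq: df, N in frame}. Every sign in the final comparison depends on handling these consistently and on using the paper's gauge convention throughout.
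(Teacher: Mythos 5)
Your proof is correct and is essentially the paper's own argument: both reduce the claim to showing that the connection form of $(FF_\lambda\invers)\cdot d$, namely $F\left(F_\lambda\invers dF_\lambda - F\invers dF\right)F\invers$, equals $(\lambda-1)A\oz+(\lambda\invers-1)A\zo$, using the structure equations \eqref{eq: U0}, \eqref{eq: V0}, the frame identities \eqref{eq: df, N in frame}, and the fact \eqref{eq:A=df} that $*A$ is left multiplication by $\tfrac12 df$. The only difference is the direction of the bookkeeping: the paper translates $\alpha_\lambda$ into frame terms and matches it with $S_\lambda\invers dS_\lambda$ for $S_\lambda=F_\lambda F\invers$, whereas you expand the gauged connection in frame terms and simplify back to $A\oz$ and $A\zo$.
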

\begin{proof}
  Recalling (\ref{eq:A=df}) that $A$ is the left multiplication by
  $-\frac{*df}2$ and $d_\lambda = d + \alpha_\lambda$ with
  \[
\alpha_\lambda = (\lambda-1)A\oz + (\lambda\invers-1) A\zo
\]
we see from (\ref{eq: df, N in frame}) 
\[
\alpha_\lambda\oz = -\frac{\lambda-1}2 e^{\frac u2} F e_- F\invers dz\,,
\quad
\alpha_\lambda\zo = \frac{\lambda\invers-1}2 e^{\frac u2} F e_+ F\invers d\bar z\,.
\]
Putting $S_\lambda = F_\lambda F\invers$ we have
\[
S_\lambda\invers dS_\lambda = F(F_\lambda\invers dF_\lambda - F\invers dF) F\invers
\]
and using (\ref{eq: U0}) and (\ref{eq: V0}) we get $S_\lambda\invers dS_\lambda
=\alpha_\lambda$.  Thus $d_\lambda = S_\lambda\invers \cdot d$ which shows the claim.
  \end{proof}
  From the previous lemma we see that for a constant $v\in\trivialC
  2=\trivial{}$ the section $\varphi_\lambda = F F_\lambda\invers v$
  is $d_\lambda$--parallel, and we obtain from (\ref{eq: sym bobenko})
  \[
f = 2 \left(\left. \frac{\partial}{\partial t} F_{e^{it}}\right\vert_{t=0} F\invers\right)  + \rm{const.}
\]
Note that this coincides with the usual Sym--Bobenko formula for the
extended frame: writing $U_\lambda= F\invers_\lambda (F_\lambda)_z$ we see
\begin{eqnarray*}
\left(\left.\frac{\partial}{\partial t} F_{e^{it}}\right|_{t=0} F\invers\right)_z &=& \left(\left.\frac{\partial}{\partial t}(F_{e^{it}})_z\right|_{t=0}\right) F\invers - \left.\frac{\partial}{\partial t} F_{e^{it}}\right|_{t=0} F\invers F_z F\invers
\\
&=& F\left(\left.\frac{\partial}{\partial t} U_{e^{it}}\right|_{t=0}\right) F\invers \\
&=& \frac 12 f_z
\end{eqnarray*}
where we used (\ref{eq: U0}) and (\ref{eq: df, N in frame}). A similar
argument gives $\left(\left.\frac{\partial}{\partial t} F_{e^{it}}\right|_{t=0} F\invers\right)_{\bar z} = \frac 12
f_{\bar z}$.

We now connect the simple factor dressing on the extended frame
\cite{terng_uhlenbeck, dorfmeister_kilian} with the frame independent
definition in Example \ref{ex: simple factor dressing}.  We fix
$\mu\in\C_*$ and recall that the simple factor dressing matrix is
given by
\[
r_\lambda =   \pi_\mu  \circ \gamma_\lambda + \pi_\mu^\perp
\]
where $M_\mu$ is a $d_\mu$--parallel bundle, $\pi_\mu$ and
$\pi_\mu^\perp$ denote the projections onto $M_\mu$ and $M_\mu^\perp$
respectively, and $\gamma_\lambda $ is the complex linear endomorphism
given by
\[
\gamma_\lambda =  \frac{1-\bar\mu\invers}{1-\mu} \frac{\lambda-\mu}{\lambda-\bar\mu\invers} \in \End_\C(\C^2)\,.
\]
In particular, the simple factor dressing $\hat N$ of $N$ by
$r_\lambda$ has associated family of flat connections $\hat
d_\lambda=r_\lambda\cdot d_\lambda$ and gives by Corollary
\ref{eq:simple factor cmc surface} a constant mean curvature surface $\hat f$. We denote
the extended frame of $\hat f$ by $\hat F_\lambda$. Then Lemma \ref{lemma: lawson frame} shows
that 
\[
\hat d_\lambda = \hat F \cdot d^{\hat F_\lambda}
\]
with $\hat F = \hat F_{\lambda=1}$. Writing $\hat S_\lambda = \hat
F_\lambda \hat F\invers$, we thus have $r_\lambda \cdot d_\lambda =
\hat S\invers_\lambda \cdot d$, and $d_\lambda = F\cdot d^{F_\lambda}$ gives
\begin{equation}
\label{eq:hatSlambda}
\hat S_\lambda = s_\lambda \circ S_\lambda \circ r_\lambda\invers
\end{equation}
with $S_\lambda= F_\lambda F\invers$, and a $z$--independent
$s_\lambda$. On the other hand, \cite{dorfmeister_kilian} give a
simple factor dressing $\tilde f$ of a constant mean curvature surface
$f$ with extended frame $F_\lambda$: the extended frame $\tilde
F_\lambda$ of $\tilde f$ is given by
\begin{equation}
\label{eq:simple factor dressing}
h_\lambda \circ F_\lambda = \tilde F_\lambda \circ g_\lambda
\end{equation}
where $h_\lambda= \pi_{M_0 }\tau_\lambda + \pi_{M_0^\perp}$ is given
by the choice of a constant line $M_0\subset\trivialC 2$ and
\[
\tau_\lambda = \frac{\lambda-\mu}{\bar\mu(\lambda-\bar\mu\invers)}\in
\End_\C(\C^2)\,.
\]
Moreover, $g_\lambda$ is obtained from $h_\lambda$ by replacing the
constant line $M_0$ by the line bundle $F_\mu\invers M_0$ given by the
extended frame, that is
\[
g_\lambda= \pi_{F_\mu\invers M_0} \tau_\lambda + \pi_{(F_\mu\invers M_0)^\perp}\,.
\]
Putting $s_\lambda = h\invers h_\lambda$, $h=h_{\lambda=1}$, we get
\[
s_\lambda = \pi_{M_0}\gamma_\lambda + \pi_{M_0^\perp}
\]
since $\tau_1\invers \tau_\lambda = \gamma_\lambda$. By
(\ref{eq:dlambda with frame})  the line $M_\mu = F
F_\mu\invers M_0$ is $d_\mu$--parallel, and the simple factor dressing
matrix $r_\lambda$ of $M_\mu$ is given by
\[
r_\lambda = F g\invers g_\lambda F\invers\,.
\]
By definition of $S_\lambda=F_\lambda F\invers$ and $s_\lambda=
h\invers h_\lambda$ this shows with (\ref{eq:simple factor dressing})
\[
s_\lambda \circ S_\lambda = (h\invers \tilde F_\lambda) \circ (h\invers \tilde F)\invers \circ  r_\lambda\,.
\]
Plugging into (\ref{eq:hatSlambda}) we see that $\hat F_\lambda =
h\invers\tilde F_\lambda W$ where $W: M \to \Gl(2,\C)$ is
independent of $\lambda$.  The Sym--Bobenko formula then yields that $\tilde
f$ and $\hat f= h\invers \tilde f h$ coincide up to translation.

%%% Local Variables: 
%%% mode: latex
%%% TeX-master: "doc"
%%% End: 
\section{Darboux transforms  of harmonic maps into the 2--sphere}

The classical Darboux transformation on isothermic surfaces can be
extended to a transformation on conformal maps $f: M \to S^4$ from a
Riemann surface into the 4-sphere, \cite{conformal_tori}. In the case
when $f$ is a constant mean curvature surface, one obtains a genuine
generalization of the classical Darboux transformation
\cite{cmc}. Here we consider a special case of the general Darboux
transformation, the so--called $\mu$--Darboux transforms. These have
constant mean curvature, but are only classical Darboux transforms for
special spectral parameter $\mu$. In particular, we obtain an induced
transformation on harmonic maps $N: M \to S^2$.

\begin{theorem}[\cite{cmc}]
\label{thm:mu-darboux}
  Let $f: M \to \R^3$ be a constant mean curvature surface in $\R^3$
  with Gauss map $N$ and associated family $d_\lambda$ of flat
  connections. For $\mu\in \C_*$ and $d_\mu$--parallel section
  $\varphi\in\Gamma(\ttrivial{})$, define
\[
 T = \frac 12(N \varphi(a-1)\varphi\invers + \varphi b\varphi\invers)
\]
where $a=\frac{\mu + \mu\invers}2, b = i \frac{\mu\invers-\mu}2$. Then
$ T$ is nowhere vanishing if $\mu\not=1$, and the map $\hat f: \tilde
M \to\R^4=\H$,
\[
\hat f = f + T\invers,
\] 
has constant real part. Moreover, $\im\hat f$ is a constant mean
curvature surface with Gauss map
\[
\hat N = - T\invers N T\,.
\]
The map $\hat f$ is called a \emph{$\mu$--Darboux transform} of $f$.
\end{theorem}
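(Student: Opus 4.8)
The plan is to show first, by a purely algebraic argument, that the quaternionic function $T$ never vanishes when $\mu\neq 1$, and then to establish that $\hat f = f + T\invers$ is a conformal immersion with normal $\hat N = -T\invers N T$ by deriving a Riccati-type equation for $T$ out of the $d_\mu$--parallelness of $\varphi$; constant mean curvature will then follow from the Ruh--Vilms characterisation together with Theorem~\ref{thm:family of flat connections}.

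First I would record the identities $a + ib = \mu$ and $a - ib = \mu\invers$, immediate from the definitions, and rewrite $T = \tfrac12\,\varphi\,(N_\varphi(a-1) + b)\,\varphi\invers$ with $N_\varphi = \varphi\invers N\varphi\in\Im\H$, $N_\varphi^2 = -1$. Since conjugation by $\varphi$ is a norm--preserving automorphism of $\H$, one has $T = 0$ at a point exactly when $N_\varphi = -b(a-1)\invers$ there; using $a-1 = (\mu-1)^2/(2\mu)$ and $b = -i(\mu-1)(\mu+1)/(2\mu)$ this reduces to $N_\varphi = i(\mu+1)/(\mu-1)$. As the right--hand side is a fixed complex number while $N_\varphi$ is a unit imaginary quaternion, equality forces $(\mu+1)/(\mu-1) = \pm 1$, i.e. $\mu = 0$ or $\mu = 1$, both excluded; hence $T$ is nowhere zero. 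The same rewriting, with multiplicativity of the quaternionic norm giving $|T| = \tfrac12|N_\varphi(a-1)+b|$ and conjugation--invariance of the real part, lets me evaluate $\Re(\hat f) = \Re(T\invers) = 2\Re(w)/|w|^2$ with $w = N_\varphi(a-1)+b$; expanding in the single varying coordinate $\langle N_\varphi, i\rangle$ and using $a\pm ib = \mu^{\pm1}$ shows that numerator and denominator are proportional affine functions of that coordinate, so $\Re(\hat f)$ is constant.

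The analytic core is the differential equation for $\varphi$. From $d_\mu\varphi = 0$ I would write $d\varphi = -\alpha_\mu\varphi$ with $\alpha_\mu = (\mu-1)A\oz + (\mu\invers-1)A\zo$, and turn this into an explicit first--order equation in $df$ and $N$ via $A\oz = *A\,\tfrac{J-I}2$, $A\zo = *A\,\tfrac{J+I}2$ (equation~(\ref{eq:A10 via *A})), together with the facts that $*A$ is left multiplication by $\tfrac12 df$ (equation~(\ref{eq:A=df})), $J$ is left multiplication by $N$, and $I$ is right multiplication by $i$. Differentiating $T = \psi\varphi\invers$, $\psi = \tfrac12(N\varphi(a-1)+\varphi b)$, using $dN = (dN)' + (dN)'' = -df + (dN)''$ and $d(\varphi\invers) = \varphi\invers\alpha_\mu$, I expect $dT$ to organise into a Riccati equation of the schematic form $dT = -T\,\eta\,T + [\,\beta,T\,] + \gamma$ with $\eta,\beta,\gamma$ built from $df$, $(dN)''$ and $N$. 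This noncommutative bookkeeping is the step I expect to be the main obstacle, both in keeping the left/right quaternion orderings straight and in recognising that the particular choice of $a,b$ (that is, of $\mu$) is exactly the initial condition ensuring the transform lands in the constant--mean--curvature class and not merely the isothermic one.

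Granted the Riccati equation, the remaining claims follow formally. Differentiating $\hat f = f + T\invers$ gives $d\hat f = df - T\invers(dT)T\invers$; substituting the Riccati equation and using $*df = Ndf = -dfN$ should yield the conformality relation $*d\hat f = \hat N\,d\hat f = -d\hat f\,\hat N$ with $\hat N = -T\invers NT$, so that $\hat N$ is the Gauss map of the conformal immersion $\im\hat f$ (that $\hat N\in S^2$ is immediate, since $\hat N^2 = T\invers N^2 T = -1$ and $\Re\hat N = -\Re N = 0$). To conclude constant mean curvature I would verify that $\hat N$ is harmonic: conjugation by $T$ should carry $d_\lambda$ into a family of flat connections of the form~(\ref{eq:dlambda}) associated to $\hat N$, whence Theorem~\ref{thm:family of flat connections} applies; equivalently, the initial condition isolated above guarantees that the mean curvature read off from $(d\hat N)' = -\hat H\,d\hat f$ is again the constant $H = 1$, and Ruh--Vilms then shows that $\im\hat f$ is a constant mean curvature surface.
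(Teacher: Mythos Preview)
The paper does not itself prove this theorem---it is quoted from \cite{cmc}---but the proof of the closely related Theorem~\ref{thm: mu darboux for gauss map} (harmonicity of $T\invers NT$ for a general harmonic $N$) explicitly follows \cite{cmc} and contains the relevant computations. Your overall strategy of nonvanishing, Riccati equation, conformality, then harmonicity matches that argument, and your nonvanishing argument is equivalent to the paper's, which observes that $T=0$ would force $N^2 = \hat b^2/(1-\hat a)^2 = (1+\hat a)/(1-\hat a)\neq -1$. Two refinements and one genuine gap:

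The Riccati equation comes out cleaner than your schematic. Writing $\hat z := \varphi z\varphi\invers$ for $z\in\C$ one finds (equation~(\ref{eq:Riccati}))
\[
dT \;=\; (dN)''\,\frac{\hat a - 1}{2}\;-\;T\,(dN)'\,T,
\]
with no commutator term; since $(dN)'=-df$ this gives $d\hat f = -T\invers(dN)''\,\tfrac{\hat a-1}{2}\,T\invers$ directly, and $*d\hat f=\hat N\,d\hat f$ follows from $*(dN)''=-N(dN)''$. For the constant real part, the algebraic identity $T\invers = -\hat N + \hat b/(1-\hat a)$ (a rearrangement of the definition of $T$; see the proof of Theorem~\ref{thm: mu darboux for gauss map}) yields $\Re(T\invers)=\Re\bigl(b/(1-a)\bigr)$ at once, since $\hat N\in\Im\H$ and conjugation by $\varphi$ preserves real parts. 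Your affine--function computation should also work but is less transparent.

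The gap is your harmonicity step. Gauging $d_\lambda$ by the $\lambda$-\emph{independent} map $T$ does not produce the associated family of $\hat N$: the resulting family equals $T\cdot d$ at $\lambda=1$, which is not the trivial connection (the Riccati equation shows $(dT)T\invers\neq 0$), so condition~(iii) of Lemma~\ref{lem:connection of correct type} fails and Theorem~\ref{thm:family gives harmonic} does not apply. The $\lambda$-dependent gauge that \emph{does} work is the simple factor dressing matrix $r_\lambda$, but identifying that with the $\mu$-Darboux transform is precisely the content of Theorem~\ref{thm:result}, so invoking it here would be circular. What the paper actually does (proof of Theorem~\ref{thm: mu darboux for gauss map}) is a direct computation from the Riccati equation showing that the Hopf fields satisfy $d*\hat Q = d*A$; since $N$ is harmonic, $d*A=0$, hence $d*\hat Q=0$ and $\hat N$ is harmonic. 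Your alternative of reading off $\hat H=1$ from $(d\hat N)'=-\hat H\,d\hat f$ would also close the argument, but it needs to be carried out rather than asserted.
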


In other words, a $\mu$--Darboux transform is, up to a translation, a
constant mean curvature surface in $\R^3$.  Note that $\hat f$ depends
on the choice of the $d_\mu$--parallel section
$\varphi\in\Gamma(\ttrivial{})$.

The Darboux transformation is a key ingredient \cite{conformal_tori}
for integrable systems methods in surface theory. In the case when
$M=T^2$ is a 2--torus the spectral curve of a conformal torus $f:
T^2\to S^2$ is essentially the set of all Darboux transforms $\hat f:
T^2\to S^4$ of $f$.  If $f: T^2\to \R^3$ is a constant mean curvature torus
this general spectral curve is biholomorphic \cite{cmc} to the
spectral curve of the harmonic Gauss map $N$ of $f$: the spectral
curve of $N$ is given \cite{hitchin-harmonic} by the compactification
of the Riemann surface which is given by the eigenlines of the
holonomies of $d_\lambda$, $\lambda\in\C_*$. The eigenlines are
exactly given by parallel sections with multipliers, that is
$d_\mu$--parallel sections $\varphi$, $\mu\in\C_*$, of the trivial
$\C^2$ bundle over $\C$  which satisfy $\gamma^*\varphi =\varphi
h_\gamma$ with $h_\gamma\in\C_*$ for $\gamma\in\pi_1(T^2)$.  On the
other hand, parallel sections give $\mu$--Darboux transforms, and the
multiplier condition then implies that the $\mu$--Darboux transform is
a conformal map on the torus.

Here we are interested in (local) transformation theory of general
constant mean curvature surfaces $f: M \to \R^3$ and will allow the
$\mu$--Darboux transforms to be defined on the universal cover $\tilde
M$ of $M$.  Note that $a, b\in\C$ in the above theorem satisfy $a^2 +
b^2=1$, however, $a,b\in\R$ if and only if $\mu\in S^1$. In this case,
$ T$ is independent of the choice of the $d_\mu$--parallel section
$\varphi$ and $T\invers=N + \frac{b}{1-a}$. In particular, $\hat f = g
+ \frac{b}{1-a}$ is a translate of the parallel constant mean
curvature surface $g= f+N$ of $f$.  On the other hand, for
$\mu\in\R_*$ we see that $a\in\R$ so that $ T$, and thus $\hat f$,
takes values in $\R^3$.

\begin{theorem}[\cite{cmc}]
  Let $f: M \to \R^3$ be a constant mean curvature surface. Then a
  constant mean curvature surface $\hat f: \tilde M \to \R^4$ is a
  classical Darboux transform of $f$ if and only if $\hat f$ is a
  $\mu$--Darboux transform of $f$ with $\mu\in\R_*\cup S^1$.
\end{theorem}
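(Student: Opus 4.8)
The plan is to reduce the whole statement to a single reality computation for the sphere congruence. Writing $\mu = re^{i\theta}$ one has $a = \tfrac12(\mu+\mu\invers) = \tfrac12\big((r+r\invers)\cos\theta + i(r-r\invers)\sin\theta\big)$, so that
\[
a\in\R \iff (r-r\invers)\sin\theta = 0 \iff \mu\in\R_*\cup S^1\,.
\]
By Theorem \ref{thm:mu-darboux} every $\mu$--Darboux transform is a conformal constant mean curvature surface and, as a (generalized) Darboux transform in the sense of \cite{conformal_tori,cmc}, it envelops together with $f$ a sphere congruence in $S^4$; conversely, by \cite{cmc} any constant mean curvature Darboux transform of $f$ arises as a $\mu$--Darboux transform for some $\mu\in\C_*$. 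Since conformality holds automatically, $\hat f$ is a \emph{classical} Darboux transform exactly when this enveloping congruence is real, i.e.\ consists of round spheres in the fixed $3$--space $\Im\H$ containing $f$. I will therefore prove that this reality holds if and only if $a\in\R$; combined with the displayed equivalence this gives the theorem.

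To phrase the reality condition I first use that $\hat f - f = T\invers$ has constant real part (Theorem \ref{thm:mu-darboux}); subtracting this constant is a translation of $\R^4$, under which Darboux pairs are preserved, and places $\hat f$ in $\Im\H$. A sphere tangent to $f$ at $f(p)$ has its centre on the normal line $f(p)+\R N(p)$, and similarly for $\hat f$, so $f$ and $\hat f$ envelop a common sphere congruence in $\Im\H$ precisely when the displacement lies in the plane spanned by the two normals, $\hat f - f\in\Span\{N,\hat N\}$. Because $\hat N = -T\invers NT$ and $\Im(T\invers) = -\Im T/|T|^2$ is parallel to $\Im T$, this is equivalent to $\Im T\in\Span\{N,\hat N\}$.

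The computation I would then carry out uses an orthonormal quaternion frame $N,m,n$ (with $Nm=n$) adapted so that $\nu:=\varphi i\varphi\invers = pN+qm$, where $p=\langle N,\nu\rangle$ and $q=\sqrt{1-p^2}$. Splitting $a = a_1 + i a_2$ and $b = b_1 + ib_2$ and using $\varphi(a-1)\varphi\invers = (a_1-1)+a_2\nu$ and $\varphi b\varphi\invers = b_1+b_2\nu$, the expression for $T$ in Theorem \ref{thm:mu-darboux} takes the form $T = t_0 + t_1N + t_2m + t_3n$ with real coefficients, where $t_0=\tfrac12(b_1-a_2p)$, $t_2=\tfrac12 b_2 q$ and, crucially, the component $t_3 = \tfrac12 a_2 q$ off the plane $\Span\{N,\nu\}$ is carried exactly by $\Im a$. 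Expanding $\hat N = -\bar T N T/|T|^2$ and imposing $\Im T\in\Span\{N,\hat N\}$ reduces, after a short manipulation, to the single scalar condition
\[
t_0\,(t_2^2+t_3^2) = 0\,.
\]
Since $p=\langle N,\nu\rangle$ is non--constant and $q\ne0$ away from the locus $\nu=\pm N$, the vanishing of $t_0$ forces $b_1=0$ (hence $\mu\in\R_*$), while $t_2=t_3=0$ forces $a_2=b_2=0$ (hence $\mu\in S^1$); conversely each of these two cases makes the product vanish, the case $\mu\in S^1$ recovering the translate of the parallel constant mean curvature surface $f+N$ from the discussion of that case above. Thus the reality criterion holds if and only if $\mu\in\R_*\cup S^1$, i.e.\ if and only if $a\in\R$.

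The main obstacle is not the arithmetic but the identification of the correct reality condition. The naive test ``$\hat f$ takes values in $\R^3$'' is useless here: by Theorem \ref{thm:mu-darboux} every $\mu$--Darboux transform has constant real part and so already lies in an affine $3$--plane, for all $\mu$. The substantive content is the reality of the enveloping sphere congruence, encoded by $\Im T\in\Span\{N,\hat N\}$, and the work lies in the quaternionic expansion of $\hat N=-\bar TNT/|T|^2$ that distils this into the clean criterion $t_0(t_2^2+t_3^2)=0$ and hence into the two loci $\R_*$ and $S^1$. A minor technical point to dispatch is the genericity ($\nu\ne\pm N$ and $p$ non--constant) needed to pass from the pointwise identity to the conclusion on $\mu$, together with the input from \cite{cmc} that a constant mean curvature Darboux transform is necessarily a $\mu$--Darboux transform.
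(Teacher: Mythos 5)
You should know first that the paper does not actually prove this statement: it is quoted from \cite{cmc}, and the intended argument (indicated by the remarks surrounding Theorem \ref{thm:mu-darboux} and equation (\ref{eq:Riccati})) is quite different from yours --- for $\mu\in S^1$ the transform is a translate of the parallel constant mean curvature surface $f+N$, while for $\mu\in\R_*$ the map $T$ takes values in $\Im\H$ and satisfies the classical Riccati equation, which by \cite{coimbra} characterizes classical Darboux transforms; the converse is run through the same characterization. Your computational core is actually right: with $\nu=\varphi i\varphi\invers=pN+qm$ one indeed gets $2T=(b_1-a_2p)+\bigl((a_1-1)+b_2p\bigr)N+b_2q\,m+a_2q\,n$, and a direct quaternionic expansion of $\hat N=-\bar TNT/|T|^2$ confirms that $\Im T\in\Span\{N,\hat N\}$ is equivalent to $t_0(t_2^2+t_3^2)=0$; the final case analysis on $a_2,b_1,b_2$ is also sound.

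However, two steps in the reduction are genuinely flawed. (a) Translating $\hat f$ alone by its constant real part does \emph{not} preserve Darboux pairs: the enveloping condition is a condition on the pair, not on each surface separately, and it is not invariant under translating one member. Concretely, a $2$--sphere in $\R^4$ tangent to $f\subset\Im\H$ has its centre in the two--dimensional normal plane $f+\R N+\R$, not on the line $f+\R N$ as you assert; as a consequence, a pair $(f,\hat f)$ with $\Re\hat f=c_0\neq 0$ constant is classical in $\R^4$ exactly when $\Im\hat f$ is a \emph{parallel} surface $f+\rho N$, $\rho$ constant --- a strictly smaller class than ``$(f,\Im\hat f)$ is classical in $\R^3$''. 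Your criterion happens to cut out the correct locus in the end, but only because of facts you neither state nor prove: that $c_0=0$ forces $t_0\equiv 0$, and that classicality at height $c_0\neq0$ forces $(t_2,t_3)\equiv 0$. (b) ``The normal lines meet'', i.e.\ $\hat f-f\in\Span\{N,\hat N\}$, is only half of the enveloping condition: the common centre must in addition be equidistant from $f(p)$ and $\hat f(p)$, since the sphere has to pass through both points. Without this equal--radius condition your criterion is necessary but not sufficient, so the implication ``$\mu\in\R_*\cup S^1\Rightarrow\hat f$ classical'' is not established --- for $\mu\in\R_*$ one must actually exhibit the congruence (the radius condition does turn out to hold automatically when $t_0=0$, with centre $f-N/(2t_1)$ and radius $1/(2|t_1|)$, but this requires a computation you omit) or invoke the Riccati characterization as \cite{cmc} does. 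Finally, the genericity assumptions ($p$ non--constant, $q\neq 0$) and the passage from the pointwise identity $t_0(t_2^2+t_3^2)=0$ to the global alternative ``$t_0\equiv0$ or $(t_2,t_3)\equiv(0,0)$'' (which needs connectedness and real--analyticity) are asserted rather than proved.
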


By Theorem \ref{thm:mu-darboux} the $\mu$--Darboux transformation
preserves the harmonicity of the Gauss map. More generally:

\begin{theorem}
\label{thm: mu darboux for gauss map}
Let $N: M \to S^2$ be a non--trivial harmonic map from a Riemann
surface into the 2--sphere and $d_\lambda$ the associated family of
flat connections (\ref{eq:dlambda}). Define for $\mu\in\C_*$ and
$d_\mu$--parallel section $\varphi\in\Gamma(\ttrivial{})$ the map $ T:
\tilde M \to \H$
\begin{equation}
\label{eq: tilde T}
T = \frac 12(N \varphi(a-1)\varphi\invers + \varphi b\varphi\invers)
\end{equation}
where $a=\frac{\mu + \mu\invers}2, b = i \frac{\mu\invers-\mu}2$. Then
$ T$ is nowhere vanishing if $\mu\not=1$, and 
\[
\hat N =   T\invers N  T
\]
is harmonic. We call $\hat N$ a \emph{$\mu$--Darboux transform} of
$N$.
\end{theorem}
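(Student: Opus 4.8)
The plan is to show that the $\mu$--Darboux transform $\hat N = T^{-1} N T$ on harmonic maps is precisely a simple factor dressing, and then invoke Example~\ref{ex: simple factor dressing} (which already establishes that simple factor dressing produces a harmonic map). Concretely, I would construct the dressing matrix $r_\lambda$ associated to the $d_\mu$--parallel line bundle $M_\mu = \varphi\C$ spanned by the given $d_\mu$--parallel section $\varphi$, and then prove that the resulting simple factor dressing $\hat N$ of Theorem~\ref{thm:dressing} coincides with $T^{-1} N T$. Since Example~\ref{ex: simple factor dressing} verifies that $r_\lambda$ satisfies all hypotheses of the Dressing Theorem for every $\mu \in \C_*$, harmonicity of $\hat N$ would then follow immediately.

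**The key computational step** is to identify the complex structure of the dressing $\hat N$ explicitly. By Remark~\ref{rem:dressing complex structure}, the complex structure $\hat J$ of the simple factor dressing is obtained by extending $\hat J|_{\hat E} = \Ad(r_\infty) J|_{\hat E}$ quaternionically, where $\hat E = r_\infty E$ and $E$ is the $+i$--eigenspace of $J$. Thus I would compute $r_\infty$ from the formula $r_\lambda = \pi_\mu \circ \gamma_\lambda + \pi_\mu^\perp$, using $\gamma_\infty = \frac{1-\bar\mu^{-1}}{1-\mu}$ (the value recorded in the commented Observations block of the excerpt), and verify that conjugation by the resulting $r_\infty$ agrees with conjugation by $T$. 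The heart of the matter is to check that $\Ad(r_\infty)$ and $\Ad(T)$ act identically on the relevant bundle, i.e.\ that $r_\infty$ and $T$ differ by a scalar (equivalently, that $T$ is, up to a scalar factor, the quaternionic endomorphism whose $\C$--linear part is $r_\infty$). Here the explicit form of $T = \frac12\bigl(N\varphi(a-1)\varphi^{-1} + \varphi b \varphi^{-1}\bigr)$ with $a = \frac{\mu+\mu^{-1}}{2}$, $b = \i\frac{\mu^{-1}-\mu}{2}$ must be matched against $\pi_\mu$, using $\pi_\mu = \frac12(\id - I\hat\imath)$ with $\hat\imath = \varphi i \varphi^{-1}$, as indicated in the Observations.

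**The main obstacle** I anticipate is the quaternionic/complex bookkeeping: $T$ is naturally a quaternionic object (left multiplication on $\trivial{}$), whereas $r_\lambda$ is defined as a $\C$--linear endomorphism of $\trivialC 2 = (\trivial{}, I)$, and the two are compared via the reality condition \thetag{iii}. I would need to track carefully how right multiplication by $i$ (the complex structure $I$) interacts with left multiplication by $N$ and $\varphi$, and confirm that the scalar discrepancy between $r_\infty$ and $T$ is absorbed harmlessly under $\Ad$. Once the identity $\hat N = T^{-1} N T$ is matched with $\Ad(r_\infty)$ applied to $J$, the nowhere--vanishing claim for $T$ when $\mu \neq 1$ follows from the invertibility of $r_\infty$ (equivalently from $a \neq 1$), and harmonicity of $\hat N$ is inherited directly from the fact that $\hat d_\lambda = r_\lambda \cdot d_\lambda$ is the associated family of a harmonic map by Theorem~\ref{thm:dressing}.
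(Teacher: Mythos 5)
Your proposal is correct, but it takes a genuinely different route from the paper's proof of this theorem. The paper argues directly, following \cite{cmc}: from $d_\mu\varphi=0$ it derives the Riccati--type equation (\ref{eq:Riccati}) for $T$, computes $d\hat N+\hat N d*\hat N$, and concludes $d*\hat Q=d*A=0$ for the Hopf field of $\hat N$. Your plan --- identify $\hat N=T\invers NT$ with the simple factor dressing for $M_\mu=\varphi\C$ and inherit harmonicity from Theorem \ref{thm:dressing} via Example \ref{ex: simple factor dressing} --- is exactly the paper's Theorem \ref{thm:result} together with its proof, moved to the front; this is legitimate and non--circular, because that identification is purely algebraic and nowhere uses harmonicity of $\hat N$. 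Two points in your sketch need sharpening. First, the object matching $r_\infty$ is not $T$ but $\hat T=T\hat\rho\invers$ (with $\rho=\frac{1-a}2$, $\hat\rho=\varphi\rho\varphi\invers$), and the agreement up to a right scalar factor holds only on $E$, where $(N-I)|_E=0$; the precise global statement is $\hat T+N-I=-r_\infty\circ\frac{2I}{1-\bar\mu\invers}$, and one also needs $[T^2\hat\rho\invers,N]=0$ to see that $\hat TE=r_\infty E$ is the $+i$ eigenspace of $\hat J$, so that Remark \ref{rem:dressing complex structure} applies. Second, the nowhere--vanishing of $T$ then does follow as you claim, but via this identity: if $T(p)=0$, then $N(p)-I$ would be invertible, contradicting $E_p\subset\ker(N-I)$; alternatively it follows more cheaply from (\ref{eq:initial condition}) as in the paper. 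As for what each approach buys: the paper's direct computation also produces the Riccati equation (\ref{eq:Riccati}), which it uses later to connect $\mu\in\R_*\cup S^1$ with classical Darboux transforms, whereas your route is more conceptual, makes harmonicity an instance of the dressing machinery, and proves Theorem \ref{thm:result} en route; note that for $\mu\in S^1$ both constructions are trivial ($r_\lambda=\id$ and $[T,N]=0$), so no case is lost.
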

\begin{rem} Again, we emphasize that $\hat N$ depends in general on
  the choice of the $d_\mu$--parallel section $\varphi$. However, if
  $\mu\in S^1$, then $a, b\in\R$ and $ T$ is independent of
  $\varphi$. But then $[ T,N]=0$ gives $\hat N =N$ for $\mu\in
  S^1$. 

  In particular, our choice of sign for a $\mu$--Darboux transform is
  so that it coincides with the sign of the simple factor dressing of
  a harmonic map on $S^1$. However note that with this choice the
  $\mu$--Darboux transform of the Gauss map of a constant mean
  curvature surface $f$ is the negative Gauss map of the
  $\mu$--Darboux transform of $f$.

\end{rem}
\begin{proof}
  We essentially follow the proof in \cite{cmc} for the analogue
  statement for the Gauss map of a constant mean curvature surface.
  Putting $\hat z = \varphi z \varphi\invers$ for $z\in\C$ we write $
  T = \frac 12(N(\hat a-1) + \hat b)$ and, if $\mu\not=1$, then
\begin{equation}
\label{eq:initial condition}
2T(1-\hat a)\invers + N = \frac{\hat b}{1-\hat a}\,.
\end{equation}
Since $ N^2(p) =-1$ and $\frac{\hat b^2}{(1-\hat a)^2}= \frac{1+\hat
  a}{1-\hat a} \not=-1$ for all $\mu\in\C_*, \mu\not=1$, this shows
that $ T(p)\not=0$ for all $p\in M$. Next we observe
with (\ref{eq:dlambda}) and (\ref{eq:A10 via *A}) that
\begin{equation}
\label{eq:dmu with a, b}
d_\mu =d+ *A(J(a-1)+b)
\end{equation}
which shows  with (\ref{eq: hopf dJ}) that
\[
0= d_\mu \varphi= d \varphi - (dN)' T \varphi
\]
and thus $d\hat z = [(dN)'T, \hat z]$ for $z\in\C$. Differentiating
(\ref{eq: tilde T}) gives with $\hat a^2 + \hat b^2=1$ the Riccati
type equation
\begin{equation}
\label{eq:Riccati}
d T = (dN)''\frac{\hat a-1}2 -  T (dN)'  T\,,
\end{equation}
which shows
\[
NdT -*dT = -\left((dN)''(N(\hat a-1)+ (NT +TN)(dN)'T\right)\,.
\]
Since $d\hat N = [\hat N, T\invers d T] + T\invers dN T $ we thus
obtain
\[
d\hat N + \hat Nd*\hat N =\frac 12
T\invers(dN + N*dN)(\hat b -(\hat a-1)\hat N)\,.
\]
Now (\ref{eq: tilde T}) gives $-(\hat a-1) + T\hat N(\hat a-1)
-T\hat b=0$, that is
\[
\hat N = T\invers + \frac{\hat b}{\hat a-1}\,,
\]
and using the Riccati type equation (\ref{eq:Riccati}) we obtain
\[
d*\hat Q = d *A
\]
for the Hopf fields of $\hat N$ and $N$. This shows that $\hat N$
is harmonic.
\end{proof}

 Note that for $\mu\in\R_*$ the equation (\ref{eq:Riccati})
  is independent of the choice of the parallel section $\varphi$. In
  particular, if $N$ is the Gauss map of a constant mean curvature
  surface $f: M \to\R^3$ then the solutions of the Riccati equation
  (\ref{eq:Riccati}) give \cite{coimbra} the classical Darboux
  transforms of $f$.  The condition (\ref{eq:initial condition}) then
  guarantees that $\hat f = f + T\invers$ has constant mean curvature.

We can now generalize the results on $\mu$--Darboux transforms for
constant mean curvature surfaces \cite{cmc} and Hamiltonian stationary
Lagrangians in \cite{hsl}: for a conformal immersion $f: M \to \R^4$
from a Riemann surface $M$ into 4--space, the Gauss map $\nu: M \to
\Gr_2(\R^4)$ is a map from $M$ into the Grassmannian of 2--planes in
$\R^4$. Identifying $\Gr_2(\R^4) = S^2\times S^2$ the Gauss map $\nu$
gives rise to two maps $N, R: M \to S^2$ satisfying
\[
*df = N df = - df R\,.
\] 
$N$ and $R$ are called the \emph{left} and \emph{right normal} of
$f$. From \cite{coimbra} we know that the $(1,0)$--part of $dN$ with
respect to $N$ is given by $(dN)' = - df H$ for some quaternion valued
function $H: M \to \H$ which satisfies $R H = HN$.

Examples of surfaces with harmonic left normal are constant mean
curvature surfaces in 3--space, minimal surfaces in 3--space, or
Hamiltonian stationary Lagrangian immersions in $\C^2=\R^4$. All
surfaces with harmonic left normal are constrained Willmore
\cite{hsl}. If a surface $f: M \to \R^4$ has harmonic left normal we
can associate again a family of flat connections $d_\lambda = d +
(\lambda-1)A\oz + (\lambda\invers-1)A\zo$ on $\trivial{} =\trivialC 2$
where $A$ is the Hopf field of the associated complex structure $J$ of
$N$. Note that the family $d_\lambda$ is trivial if and only if $f$ is
a minimal surface.

\begin{theorem}
\label{thm:darboux of harmonic left normal}
Let $f: M \to \R^4$ be a conformal immersion with harmonic left normal
$N: M \to S^2$ which is not a minimal surface,  so that $(dN)'=-df H$ with
non--trivial $H:M \to \H$. For $\mu\in\C_*$ let $
\varphi\in\Gamma(\ttrivial{})$ be a $d_\mu$--parallel section of the
associated family of flat connections of $N$. For $\mu\not=1$ put
$T=\frac 12(N\varphi(a-1)\varphi\invers + \varphi b \varphi \invers)$
with $a =\frac{\mu+\mu\invers}2, b = i \frac{\mu\invers-\mu}2$, and
\[
 \hat f= f + (HT)\invers
\] 
away from the (isolated) zeros of $H$.

Then the map $\hat f$ is either constant, or a (branched) conformal
immersion with harmonic left normal $\hat N = -T\invers N T$.
\end{theorem}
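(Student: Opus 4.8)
The plan is to deduce the harmonicity of $\hat N$ from Theorem \ref{thm: mu darboux for gauss map} and to establish conformality together with the claimed left normal by computing $d\hat f$ directly, the one non-formal input being a type condition on $dH$.

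First the harmonicity. The data $T$, $\varphi$ and the associated family $d_\lambda$ are exactly those of Theorem \ref{thm: mu darboux for gauss map}, so that theorem applies unchanged: $T$ is nowhere vanishing for $\mu\neq1$, and $T\invers N T$ is harmonic. Since $\hat N=-T\invers N T$ and the negative of a harmonic map into $S^2$ is harmonic (the equation $d*dN=N\,dN\wedge*dN$ is invariant under $N\mapsto-N$), $\hat N$ is harmonic; moreover $\hat N^2=T\invers N^2 T=-1$, so $\hat N\colon\tilde M\to S^2$. Because $H$ has isolated zeros, $\hat f=f+(HT)\invers$ is defined and smooth away from these zeros.

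Next the differential. Writing $S=HT$, so $\hat f=f+S\invers$ and $d\hat f=df-S\invers(dS)S\invers$, I would expand $dS=(dH)T+H\,dT$ using the Riccati equation (\ref{eq:Riccati}), $dT=(dN)''\tfrac{\hat a-1}{2}-T(dN)'T$, which holds verbatim here, together with $(dN)'=-df\,H$ and hence $(dN)'T=-df\,S$. The key structural feature is that the term $-T(dN)'T=SdfS$ becomes, after conjugation by $S\invers$, exactly $+df$, cancelling the leading $df$ in $d\hat f$. Tidying with $T=H\invers S$ and $S\invers H=T\invers$, the computation should leave
\[
d\hat f=-T\invers\Bigl[H\invers\,dH+(dN)''\tfrac{\hat a-1}{2}\,T\invers\Bigr]H\invers .
\]
To identify $\hat N$ as the left normal I must verify $*d\hat f=\hat N\,d\hat f$. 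Applying $*$ only to the form factors and comparing with $\hat N\,d\hat f=T\invers N[\,\cdots]H\invers$, the $(dN)''$ terms match automatically since $(dN)''$ has type $(0,1)$ with respect to $N$, i.e. $*(dN)''=-N(dN)''$; what remains is exactly $*dH=-R\,dH$, where $R$ is the right normal of $f$ and I use $HNH\invers=R$, which comes from $RH=HN$.

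I expect this last identity to be the main obstacle, being the only genuinely non-formal step. It follows from harmonicity of $N$: since $(dN)'$ is closed, $0=d(dN)'=-d(df\,H)=df\wedge dH$. Evaluating on a frame $(X,J_MX)$ and using $*df=-df\,R$ gives $df(X)\bigl[*dH(X)+R\,dH(X)\bigr]=0$, and as $f$ is an immersion $df(X)\neq0$, so the absence of zero divisors in $\H$ forces $*dH=-R\,dH$ pointwise. Once $*d\hat f=\hat N\,d\hat f$ is established with $\hat N\in S^2$, the standard quaternionic identity $\Re(\bar v\hat N v)=|v|^2\Re\hat N=0$ shows that $\hat f$ is conformal with left normal $\hat N$ wherever $d\hat f\neq0$; if $d\hat f\equiv0$ then $\hat f$ is constant, and otherwise the isolated zeros of $d\hat f$ are branch points, yielding the (branched) conformal immersion asserted in the statement.
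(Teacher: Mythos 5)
Your proposal is correct, and it takes a genuinely different route from the paper's proof. The paper never touches the Riccati equation in this proof: it uses parallelness directly, writing $d\varphi = -df\,HT\varphi$, setting $\beta = HT\varphi$ so that $(HT)\invers = \varphi\beta\invers$, whence the $df$ terms cancel at once and $d\hat f = -(HT)\invers\, d\beta\, \varphi\invers (HT)\invers$; the type identity it needs is $*d\beta = -R\,d\beta$, which it extracts from $0 = d(d\varphi) = -df\wedge d\beta$ by exactly the no-zero-divisor argument you apply to $dH$. You instead differentiate $T$ via (\ref{eq:Riccati}) and push the type question onto $*dH = -R\,dH$, obtained from closedness of $(dN)' = -df\,H$, which is a correct and self-contained derivation. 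The two computations agree: expanding $d\beta$ with the Riccati equation gives $d\beta = \bigl((dH)T + H(dN)''\tfrac{\hat a -1}{2}\bigr)\varphi$, which is precisely your formula for $d\hat f$ after conjugation, so your identity $*dH = -R\,dH$ and the paper's $*d\beta = -R\,d\beta$ carry the same information. The remaining ingredients (Theorem \ref{thm: mu darboux for gauss map} for the non-vanishing of $T$ and the harmonicity of $T\invers N T$, invariance of harmonicity under $N\mapsto -N$, the identity $R = HNH\invers$, conformality from $\Re(\bar v \hat N v)=0$) coincide with, or are correct elaborations of, what the paper does.

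The one point you gloss over is why the zeros of $d\hat f$ are isolated when $\hat f$ is non-constant; a priori a smooth $d\hat f$ could vanish on an open set without vanishing identically, and pointwise conformality alone does not exclude this. The paper gets the dichotomy from the quaternionic holomorphicity result it cites as \cite{Klassiker}: since $*d\beta = -R\,d\beta$, the differential $d\beta$ (hence $d\hat f$) either vanishes identically or has only isolated zeros. In your setup the same citation applies directly to $\hat f$: $d\hat f$ is closed and you have established $*d\hat f = \hat N\,d\hat f$, so $\hat f$ is a conformal map and the cited result gives the dichotomy. Thus the gap is filled by one sentence, but that non-formal input is genuinely needed and should be stated.
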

\begin{proof}
  As before (\ref{eq:dmu with a, b}) we have $d_\mu = d+*A(J(a-1)+b)$,
  so that with $(dN)'=-df H$ for a $d_\mu$--parallel section
  $\varphi\in\Gamma(\ttrivial{})$
\[
d\varphi = -df H T \varphi\,.
\]
Putting $\beta= HT\varphi$ this gives $0 = df\wedge d\beta$ which
implies $*d\beta =-Rd\beta$ by type arguments. In particular, $d\beta$
has only isolated zeros if $\beta$ is not constant \cite{Klassiker}.
From Theorem \ref{thm: mu darboux for gauss map} we see that $T$ has
no zeros so that $(HT)\invers$ is defined away from the zeros of $H$,
and
\[
d\hat f = df +d(H T)\invers = df + d(\varphi\beta\invers) = -
(HT)\invers d\beta \varphi\invers (HT)\invers
\]
shows that $\hat f$ is either constant, or a branched conformal
immersion with 
\[
*d\hat f = - (HT)\invers R (HT) d\hat f\,.
\]
Using $RH = HN$ we see that in the latter case $\hat f$ has left
normal
\[
\hat N = - T\invers N T\,.
\]
Theorem \ref{thm: mu darboux for gauss map} therefore shows that the
left normal $\hat N$ of $\hat f$ is harmonic.
\end{proof}

\begin{rem}
  Since $d_\mu$--parallel sections are holomorphic, the arguments in
  \cite{hsl} for the special case of Hamiltonian stationary
  Lagrangians show that $\hat f$ as defined in the above theorem is a
  generalized Darboux transform of $f$.  We call $\hat f$ a
  \emph{$\mu$--Darboux transform} of $f$ as it arises from a
  $d_\mu$--parallel section $\varphi$ for $\mu\in\C_*$.
\end{rem}

Similarly, a $\mu$--Darboux transformation is defined on the conformal
Gauss map of a (constrained) Willmore surface $f: M \to S^4$ and an
analogue of Theorem \ref{thm: mu darboux for gauss map} holds
\cite{willmore_harmonic}.

%%% Local Variables: 
%%% mode: latex
%%% TeX-master: "doc"
%%% End: 
\section{Darboux transformation and simple factor dressing}

We show that the $\mu$--Darboux transformation and the simple factor
dressing of a harmonic map coincide. In particular, a $\mu$--Darboux
transform of a constant mean curvature surface $f: M \to \R^3$ is
given by a simple factor dressing of the Gauss map of the parallel
surface $g$ of $f$, and vice versa. This generalizes results for
classical Darboux transformations \cite{darboux_isothermic},
\cite{fran_epos}, \cite{inoguchi_kobayashi}. Moreover, since the
$\mu$--Darboux transformation is defined for all surfaces $f: M
\to\R^4$ with harmonic left normal, the simple factor dressing on the
harmonic left normal can thus also be given an interpretation on the
level of surfaces.

\begin{theorem}
\label{thm:result} 
  Let $N: M \to S^2$ be a non--trivial harmonic map. Then every $\mu$--Darboux
  transform of $N$ is given by a simple factor dressing, and vice
  versa.

  More precisely, if we denote by $d_\lambda$ the associated family of
  flat connections of $N$ and put $M_\mu =\varphi\C$ for a
  $d_\mu$--parallel section $\varphi\in\Gamma(\ttrivial{})$,
  $\mu\in\C_*$, then the simple factor dressing $\hat N$ of $N$ with
  respect to $M_\mu$ is the $\mu$--Darboux transform of $N$ with
  respect to $\varphi$, that is
\[
\hat N = T\invers N T
\]
with $T = \frac 12(N\varphi(a-1)\varphi\invers + \varphi b\varphi
\invers)$ and $a=\frac{\mu+\mu\invers}2, b = i
\frac{\mu\invers-\mu}2$.
\end{theorem}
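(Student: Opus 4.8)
The plan is to reduce the claimed equality of harmonic maps to a single pointwise statement about complex lines and then verify that statement by a direct computation in a basis adapted to $E$. Recall that a harmonic map $N\colon M\to S^2$ is encoded by its complex structure $J$ (left multiplication by $N$), and that $J$ is the quaternionic extension of $I|_E$, where $E\subset\trivialC 2$ is its $+i$--eigenline. Hence $N$ is determined by $E$, and to prove $\hat N = T\invers N T$ it suffices to check that the simple factor dressing $\hat N$ and the $\mu$--Darboux transform $T\invers N T$ have the same $+i$--eigenline.

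First I would record the two eigenlines. By Remark \ref{rem:dressing complex structure} the simple factor dressing $\hat N$ has $+i$--eigenline $\hat E = r_\infty E$, where, from Example \ref{ex: simple factor dressing}, $r_\infty = \gamma_\infty\pi_\mu + \pi_\mu^\perp$ with $\gamma_\infty = \frac{1-\bar\mu\invers}{1-\mu}$ and $M_\mu = \varphi\C$. On the other hand, since $T$ acts by left multiplication by a quaternion it commutes with the right multiplication $I$, so a section $\phi$ lies in the $+i$--eigenline of $T\invers N T$ precisely when $N(T\phi) = (T\phi)i$, that is when $T\phi\in E$; thus $T\invers N T$ has $+i$--eigenline $T\invers E$. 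The theorem therefore reduces to the identity
\[
r_\infty E = T\invers E, \qquad\text{equivalently}\qquad T\, r_\infty E = E .
\]

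To establish this I would fix a local section $\epsilon$ spanning $E$, so that $N\epsilon = \epsilon i$; then $\{\epsilon, \epsilon j\}$ is a complex basis of $\trivialC 2$ with $\epsilon j$ spanning $E^\perp$. Expanding the parallel section as $\varphi = \epsilon s + (\epsilon j)t$ with $s,t\in\C$ exhibits $M_\mu = \varphi\C$ and $M_\mu^\perp = \varphi j\C$ in this basis, and a short computation gives $r_\infty\epsilon = \epsilon P + (\epsilon j)Q$ with $P,Q$ explicit in $s,t,\gamma_\infty$. It then remains to apply $T = \frac12(N\varphi(a-1)\varphi\invers + \varphi b\varphi\invers)$ to this vector and show that the $(\epsilon j)$--component of $T\, r_\infty\epsilon$ vanishes, so that $T\, r_\infty\epsilon\in E$. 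Here I would use the arithmetic relations $a+ib=\mu$ and $a-ib=\mu\invers$ (immediate from $a=\frac{\mu+\mu\invers}2$, $b=i\frac{\mu\invers-\mu}2$) to rewrite all $\mu$--dependence, so that the vanishing of that component becomes an algebraic identity matching $\gamma_\infty=\frac{1-\bar\mu\invers}{1-\mu}$ against the coefficients $s,t$ of $\varphi$.

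The main obstacle is exactly this last computation: it is a bookkeeping exercise in the noncommutative algebra of $\H$, where one must keep careful track of left versus right multiplication (the quaternions $T$ and $N$ act on the left, while the complex structure $I$ and the coefficients $s,t$ act on the right) and then simplify the resulting expression in $\mu$. As a consistency check I note that for $\mu\in S^1$ one has $r_\infty=\id$ and $[T,N]=0$, so both eigenlines equal $E$ and $\hat N = N$, matching the triviality of the simple factor dressing on the unit circle. Once $r_\infty E = T\invers E$ is verified, the two harmonic maps share the same $+i$--eigenline, hence the same complex structure, and therefore $\hat N = T\invers N T$, which is the assertion of the theorem.
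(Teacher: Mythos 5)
Your reduction is correct and structurally the same as the paper's: both arguments come down to showing that the simple factor dressing and $T\invers N T$ have the same $+i$--eigenline in $\ttrivialC 2$, with the dressing side handled by Remark \ref{rem:dressing complex structure}. Your identification of the $+i$--eigenline of $T\invers NT$ as $T\invers E$ (legitimate because $T$ is nowhere vanishing for $\mu\neq1$, by Theorem \ref{thm: mu darboux for gauss map}) is actually more direct than the paper's route, which works with $\hat T = T\hat\rho\invers$, $\hat\rho = \varphi\tfrac{1-a}{2}\varphi\invers$, and needs the commutator identity $[T^2\hat\rho\invers,N]=0$ to identify the eigenline as $\hat TE$; the two descriptions agree precisely because $T^2\hat\rho\invers$ commutes with $N$. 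The consistency check at $\mu\in S^1$ is also fine.

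The genuine gap is that you stop exactly where the theorem begins: the identity $T r_\infty E=E$ is announced, reduced to the vanishing of the $(\epsilon j)$--component of $Tr_\infty\epsilon$, and then left as ``bookkeeping'' that you explicitly do not carry out. That computation is not a side issue; it is the entire content of the statement, since it is the only place where the quaternion $T=\tfrac12(N\varphi(a-1)\varphi\invers+\varphi b\varphi\invers)$ gets matched against the factor $\gamma_\infty=\tfrac{1-\bar\mu\invers}{1-\mu}$. For the record, your plan does close: writing $\varphi=\epsilon s+(\epsilon j)t$ gives $(|s|^2+|t|^2)\epsilon=\varphi\bar s-(\varphi j)t$, hence $(|s|^2+|t|^2)\,r_\infty\epsilon=\varphi\gamma_\infty\bar s-(\varphi j)t$; expanding $2T\varphi=N\varphi(a-1)+\varphi b$ and $2T(\varphi j)=N(\varphi j)\overline{(a-1)}+(\varphi j)\bar b$ in the basis $\{\epsilon,\epsilon j\}$, the $(\epsilon j)$--coefficient of $2(|s|^2+|t|^2)\,Tr_\infty\epsilon$ comes out as $t\bar s\bigl(\gamma_\infty(b-i(a-1))-\overline{i(a-1)+b}\bigr)$, and since $i(a-1)+b=i(1-\mu)\mu\invers$ while $b-i(a-1)=i(1-\mu)$, this bracket vanishes exactly because $\gamma_\infty=\tfrac{1-\bar\mu\invers}{1-\mu}$. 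The paper performs the equivalent calculation in the basis $\{\varphi,\varphi j\}$ adapted to $M_\mu$, where $r_\infty$ is diagonal: using (\ref{eq:initial condition}) it derives the operator identity $\hat T+N-I=-r_\infty\circ\tfrac{2I}{1-\bar\mu\invers}$ and then restricts to $E$, where $N-I$ vanishes. One of these computations must actually appear; as submitted, your proof is missing its crux.
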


\begin{proof}
  In this proof we adapt the arguments in \cite{quintino} to the case
  of harmonic maps $N: M \to S^2$, and generalize her setting from
  $\mu\in\R_*\cup S^1$ to the general case $\mu\in\C_*$: As before, we
  denote by $\hat z = \varphi z\varphi\invers$ for $z\in\C$, and
  recall that $\hat a^2 + \hat b^2=1$.  Let $J$ be the complex
  structure of $N$ and $E$ the $+i$ eigenspace of $J$. Putting $\rho
  =\frac{1-a}2$ and $\hat T = T \hat\rho\invers$ we first show that
  the $+i$ eigenspace of the complex structure $\hat J$ of $\hat N =
  T\invers N T$ is given by $\hat E = \hat T E$: the equation
  (\ref{eq:initial condition}) shows $(\hat T + N)^2 = -1 + \hat
  \rho\invers$, that is,
\[
\hat T^2 + \hat T N + N \hat T = \hat \rho\invers\,.
\]
From this we see  that $N$ commutes with
\[
\hat \rho  T^{-2} = 1 + N \hat T\invers + \hat T\invers N
\]
and thus $[T^2\hat \rho\invers, N]=0$.  For $\hat \phi = \hat T\phi$,
$\phi\in E$ we therefore obtain
\[
\hat N \hat \phi = \hat T N \phi = \hat \phi i\,,
\]
and $\hat E$ is the $+i$ eigenspace of $\hat J$.  

Since $\hat N$ is completely determined by the $+i$ eigenspace of
$\hat J$ it is enough to show by Remark \ref{rem:dressing complex
  structure} that $\hat E = r_\infty E$ where $r_\lambda = \pi_\mu
\circ \gamma_\lambda + \pi_\mu^\perp$.  Here $\pi_\mu$ and
$\pi_\mu^\perp$ are the projections onto $M_\mu$ and $M_\mu^\perp$
respectively, and $\gamma_\lambda = \frac{1-\bar\mu\invers}{1-\mu} \
\frac{\lambda-\mu}{\lambda-\bar\mu\invers}$.  We first observe that
$a-1= \frac{ \mu\invers(\mu-1)^2}2$ and
$b=i\frac{\mu\invers(1-\mu^2)}2$ so that
\[
\frac{b}{1-a} = i \frac{\mu+1}{\mu-1}\,.
\]
Since $\varphi\in \Gamma(M_\mu)$ we have $r_\infty \varphi = \varphi
\frac{1-\bar\mu\invers}{1-\mu}$ and $r_\infty(\varphi j) = \varphi j$,
so that (\ref{eq:initial condition}) shows
\[
(\hat T + N - I)\varphi = \varphi  \frac{2i}{\mu-1} =- (r_\infty)\frac{2I}{1-\bar\mu\invers}\varphi
\]
and 
\[
(\hat T + N - I)(\varphi j) = -\varphi j \frac {2i}{1-\bar\mu\invers} =
-(r_\infty)\frac{2I}{1-\bar\mu\invers}\varphi j\,,
\]
in other words,
\[
\hat T + N - I = - r_\infty \circ \frac{2I}{1-\bar\mu\invers}\,.
\]
Finally, for $\phi\in E$ we have $(N-I)\phi=0$ since $E$ is the $+i$ eigenspace of $J$, and thus
\[
\hat T\phi =- r_\infty \phi \frac{2i}{1-\bar\mu\invers}\,.
\]
This proves that $\hat T E = r_\infty E$, and thus $\hat N$ is the
simple factor dressing of $N$ by $r_\lambda$.

\end{proof}

As an immediate consequence of Theorem \ref{thm:result} and Theorem
\ref{thm:mu-darboux} simple factor dressing and the $\mu$--Darboux
transformation are essentially the same for constant mean curvature
surfaces:

\begin{theorem}
  The Gauss map of a $\mu$--Darboux transform $\hat f$ of a constant
  mean curvature surface $f: M \to \R^3$ is a simple factor dressing
  of the Gauss map of the parallel surface $g=f+N$ of $f$, and vice versa.
\end{theorem}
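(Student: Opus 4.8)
The plan is to express both the $\mu$--Darboux transform and the simple factor dressing as conjugations of $N$, and then to absorb the sign discrepancy between Theorem~\ref{thm:mu-darboux} and Theorem~\ref{thm:result} into the passage to the parallel surface. First I would record that $g=f+N$ is again a constant mean curvature surface with Gauss map $-N$. Since $H=1$, equation (\ref{eq: mean curvature}) gives $(dN)'=-df$, so $dg=df+dN=(dN)''$; as $(dN)''$ has type $(0,1)$ with respect to $N$, one has $*(dN)''=-N(dN)''$, whence $*dg=(-N)\,dg$. Thus $g$ is a (branched) conformal immersion with Gauss map $-N$, and a short computation of its mean curvature confirms $H=1$.

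I would then combine the two transformation results. Theorem~\ref{thm:mu-darboux} gives the $\mu$--Darboux transform $\hat f=f+T\invers$ the Gauss map $-T\invers NT$, where $T$ is built from a $d_\mu$--parallel section $\varphi$ of the associated family $d_\lambda$ of $N$. On the other side, Theorem~\ref{thm:result} applied to the harmonic map $-N$, the Gauss map of $g$, identifies every simple factor dressing of $-N$ with a $\mu$--Darboux transform of $-N$, hence one of the form $S\invers(-N)S=-S\invers NS$, where $S$ is built from a $d^{-N}_\mu$--parallel section $\psi$ and $d^{-N}_\lambda$ is the associated family of $-N$ (whose Hopf field is the field $Q$ of $N$, so that $d^{-N}_\lambda\neq d_\lambda$ in general). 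The theorem therefore reduces to the single identity $S\invers NS=T\invers NT$ for matching choices of $\varphi$ and $\psi$, the outer signs then aligning the Gauss map $-T\invers NT$ of $\hat f$ with the dressing $-S\invers NS$ of $-N$.

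The hard part is exactly this matching, which is the statement that the $\mu$--Darboux transformation commutes with passage to the parallel surface. I would establish it by showing that the parallel surface $\hat g=\hat f+\hat N$ of $\hat f$ is itself a $\mu$--Darboux transform of $g$. Comparing $\hat g=\hat f+\hat N=f+T\invers-T\invers NT$ with $\hat g=g+S\invers=f+N+S\invers$ forces $S\invers=T\invers-T\invers NT-N$; the task is then to verify that this $S$ solves the Riccati equation (\ref{eq:Riccati}) for $g$, which is obtained from that for $f$ by the substitutions $(dN)'\mapsto-(dN)''$ and $(dN)''\mapsto-(dN)'$ forced by $N\mapsto-N$, and that it satisfies the initial condition (\ref{eq:initial condition}); this exhibits $S$ as arising from a genuine $d^{-N}_\mu$--parallel section $\psi$. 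Granting this, Theorem~\ref{thm:mu-darboux} applied to $g$ gives $\hat g$ the Gauss map $S\invers NS$, while being the parallel surface of $\hat f$ forces that Gauss map to equal $-\hat N=T\invers NT$; this is precisely $S\invers NS=T\invers NT$. The ``vice versa'' direction is then immediate: since by Theorem~\ref{thm:result} every simple factor dressing of $-N$ is realised as a $\mu$--Darboux transform of $-N$, the same correspondence between $d^{-N}_\mu$-- and $d_\mu$--parallel sections, read backwards, produces for each such dressing a $\mu$--Darboux transform of $f$ with the prescribed Gauss map.
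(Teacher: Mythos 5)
Your reduction of the theorem is correct and is, in fact, more explicit than the paper's own treatment: the paper offers no written proof, asserting the statement as an ``immediate consequence'' of Theorems \ref{thm:result} and \ref{thm:mu-darboux}, which silently presupposes exactly the matching you isolate --- namely that simple factor dressings of $-N$ (which are built from the associated family of $-N$, whose Hopf field is $Q$, not from $d_\lambda$) and $\mu$--Darboux transforms of $f$ (built from $d_\mu$--parallel sections $\varphi$ of the family of $N$) can be paired so that $S\invers N S = T\invers N T$. Your preliminary steps are all sound: $g$ is CMC with Gauss map $-N$, the formula $S\invers = T\invers - T\invers N T - N$ is forced, and your substitution rule for the Riccati equation of $g$ is the right one.

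The genuine gap is the mechanism you propose for the ``hard part''. You want to verify that $S$ solves the Riccati equation (\ref{eq:Riccati}) for $g$ together with the initial condition (\ref{eq:initial condition}), and conclude that this ``exhibits $S$ as arising from a genuine $d^{-N}_\mu$--parallel section $\psi$''. For $\mu\in\C_*\setminus(\R_*\cup S^1)$ that conclusion is unavailable and, as stated, circular: the coefficients $\hat a = \psi a \psi\invers$ and $\hat b = \psi b \psi\invers$ appearing in (\ref{eq:Riccati}) and (\ref{eq:initial condition}) are defined in terms of the very section $\psi$ you are trying to produce, so there is no well-posed ``Riccati equation for $g$'' to check $S$ against; moreover the converse implication (Riccati solution plus initial condition implies existence of a parallel section) is proved nowhere in the paper --- the remark after Theorem \ref{thm: mu darboux for gauss map} supplies it, via \cite{coimbra}, only for $\mu\in\R_*$, where the coefficients are the constants $a,b$. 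Since the point of the theorem is general $\mu\in\C_*$, this step fails. The repair is to produce $\psi$ explicitly: take $\psi := T\varphi$. The two identities established in the proof of Theorem \ref{thm: mu darboux for gauss map}, namely $d\varphi = (dN)'T\varphi$ and the Riccati equation for $T$, give $d\psi = (dN)''\varphi\,\frac{a-1}{2}$, while the algebraic identity $-NT(\hat a -1) + T\hat b = 1-\hat a$ (expand $T=\frac12(N(\hat a -1)+\hat b)$ and use $\hat a\hat b=\hat b\hat a$, $\hat a^2+\hat b^2=1$) shows that $S_\psi := \frac12\bigl(-N\psi(a-1)\psi\invers + \psi b\,\psi\invers\bigr) = \hat\rho\, T\invers$ with $\hat\rho = \frac{1-\hat a}{2}$. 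Since $dg=(dN)''$, these combine to $d\psi = -dg\,S_\psi\psi$, i.e.\ $\psi$ is $d^{-N}_\mu$--parallel, and $S_\psi\invers = T\hat\rho\invers$ is precisely your $T\invers - T\invers NT - N$. Your key identity $S_\psi\invers N S_\psi = T\invers N T$ is then immediate from the commutation relation $[T^2\hat\rho\invers, N]=0$ already proved in Theorem \ref{thm:result}, and the ``vice versa'' follows by running the same correspondence from $g$ back to its parallel surface $f$.
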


More generally, if $f$ is a surface with harmonic left normal $N$,
then Theorem \ref{thm:result} and Theorem \ref{thm:darboux of harmonic
  left normal} show that a simple factor dressing of $N$ is induced by a
transformation on the surface $f$:

\begin{theorem}
  Let $f:M \to\R^4$ be a conformal immersion with harmonic left normal
  $N: M \to S^2$ which is not a minimal surface. Then a simple factor
  dressing of $-N$ is the left normal of a $\mu$--Darboux transform of
  $f$, and vice versa.
\end{theorem}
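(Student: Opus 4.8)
The plan is to derive the theorem from Theorem~\ref{thm:darboux of harmonic left normal} and Theorem~\ref{thm:result}, so that the only substantial new point is the behaviour of simple factor dressing under the sign change $N\mapsto -N$. First I would unwind the left normal produced by Theorem~\ref{thm:darboux of harmonic left normal}: for a $d_\mu$--parallel section $\varphi$ of the associated family of $N$ and the corresponding $T=\frac12(N\varphi(a-1)\varphi\invers+\varphi b\varphi\invers)$, the $\mu$--Darboux transform $\hat f=f+(HT)\invers$ has left normal $-T\invers N T$. By Theorem~\ref{thm: mu darboux for gauss map} the harmonic map $\hat N=T\invers N T$ is exactly the $\mu$--Darboux transform of $N$ determined by $\varphi$, and by Theorem~\ref{thm:result} this $\hat N$ is the simple factor dressing of $N$ given by $\mu$ and $M_\mu=\varphi\C$. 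Thus the left normal of $\hat f$ is $-\hat N$, the left normals of the $\mu$--Darboux transforms of $f$ are precisely the negatives of the simple factor dressings of $N$, and the theorem reduces to the purely harmonic--map assertion that $-\hat N$ is a simple factor dressing of $-N$, and conversely.

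To establish this reduced statement I would compare the complex structures of $-\hat N$ and of the prospective dressing of $-N$ through their $+i$--eigenspaces, using Remark~\ref{rem:dressing complex structure}. Writing $E$ for the $+i$--eigenspace of $J$, the complex structure of $-N$ has $+i$--eigenspace $E^\perp$, while $\hat N$ has $+i$--eigenspace $r_\infty E$, where $r_\lambda=\pi_\mu\gamma_\lambda+\pi_\mu^\perp$. Hence $-\hat N$ has $+i$--eigenspace $(r_\infty E)j$, and the reality condition $(r_\infty\phi)j\,\gamma_0=r_0(\phi j)$ for the dressing matrix rewrites this line field as $r_0E^\perp$, with $r_0=\pi_\mu\gamma_0+\pi_\mu^\perp$ and $\gamma_0=\frac{1-\bar\mu}{1-\mu\invers}$. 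It therefore suffices to exhibit, for the associated family of $-N$, a parallel line subbundle and a spectral parameter whose simple factor dressing has $+i$--eigenspace exactly $r_0E^\perp$; granting this, Remark~\ref{rem:dressing complex structure} together with Theorem~\ref{thm:result} applied to $-N$ yields the statement and, by symmetry, its converse.

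The hard part will be that the associated family of $-N$ is not the family $d_\lambda$ of $N$: since the complex structure changes sign, the Hopf field governing $-N$ is the \emph{other} Hopf field $Q$ of $N$, so the two families are built from $A$ and from $Q$ respectively and a $d_\mu$--parallel section for $N$ is not parallel for $-N$. The crux is to reconcile the $A$-- and $Q$--families by producing a parallel bundle $M'$ and parameter $\mu'$ for $-N$ whose simple factor $r'_\infty$ reproduces the line field $r_0E^\perp$ found above. I expect the required parallel data for $-N$ to be built from $\varphi$ and $T$, the Darboux data already relating the two pictures, and $\mu'$ to be fixed by the reality normalisation linking $\gamma_0$ to $\gamma'_\infty$; this reconciliation is the step I anticipate carrying the real weight of the argument, playing here the role that the passage to the parallel surface $g=f+N$ plays in the constant mean curvature case.
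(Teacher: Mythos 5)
Your first two paragraphs are correct and, in fact, more careful than the paper itself: the paper states this theorem with no proof, as an ``immediate'' combination of Theorem~\ref{thm:result} and Theorem~\ref{thm:darboux of harmonic left normal}, and that combination literally yields only that the left normal of $\hat f$ is $-T\invers NT$, i.e.\ the \emph{negative} of a simple factor dressing of $N$. You rightly isolate the residual claim --- that $-T\invers NT=T\invers(-N)T$ is a simple factor dressing of $-N$ --- and rightly observe that it is not formal, since the associated family of $-N$ is built from the Hopf field $Q$ rather than $A$, so the given parallel data does not transfer automatically; your computation that $-\hat N$ has $+i$ eigenspace $(r_\infty E)j=r_0E^\perp$ is also correct.

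The problem is that your proposal stops exactly at the step carrying the content: you never produce the parallel line bundle for the family $d'_\lambda$ of $-N$, you only ``expect'' and ``anticipate'' it, and the route you hint at would fail. Matching $\gamma_0$ with $\gamma'_\infty$ via the reality condition forces $\mu'=\bar\mu$ and $M'=M_\mu^\perp=M_\mu j$, but this bundle is \emph{not} $d'_{\bar\mu}$--parallel: since $\varphi j$ is $d_{\bar\mu\invers}$--parallel one computes
\[
d'_{\bar\mu}(\varphi j)=(Q-A)\bigl((\bar\mu-1)\pi_E+(\bar\mu\invers-1)\pi_{E^\perp}\bigr)(\varphi j)\,,
\]
and $Q-A=-\tfrac12 *dJ$ does not map this section into $\varphi j\C$ in general. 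The correct data keeps the \emph{same} spectral parameter: $\mu'=\mu$ and $M'=TM_\mu=\varphi'\C$ with $\varphi'=(\pi_E+\mu\pi_{E^\perp})\varphi$. That $d'_\mu\varphi'=0$ is a short computation from $d\pi_E=-\tfrac12 I\,dJ$, $dJ=2(*Q-*A)$ and the fact that $A,Q$ interchange $E$ and $E^\perp$: both $d\varphi'$ and the connection term reduce to $\pm(\mu-1)QIJ\varphi$ and cancel. With this $\varphi'$ one finds $T\varphi=\varphi'\,\tfrac{i(1-\mu)}{2\mu}$ and $T'\varphi'=\varphi\,\tfrac{i(1-\mu)}{2}$, whence $T'{}\invers(-N)T'=-T\invers NT$; Theorem~\ref{thm:result} applied to the harmonic map $-N$ then identifies the left normal of $\hat f$ as the simple factor dressing of $-N$ with respect to $(\mu,TM_\mu)$, and since $(\pi_{E^\perp}+\mu\pi_E)\varphi'=\mu\varphi$ the correspondence inverts, giving the converse. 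Without this lemma (or an equivalent one) your proposal does not prove the statement.
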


%%% Local Variables: 
%%% mode: latex
%%% TeX-master: "doc"
%%% End: 

\bibliographystyle{alpha}
\bibliography{doc}
%\bibliography{harmonic}

\end {document}